\newcommand{\A}{\mathcal{A}}
\newcommand{\B}{B_{\ell_2}}
\newcommand{\C}{\mathbb{C}}
\newcommand{\M}{\mathcal {M}}
\newcommand{\N}{\mathbb {N}}
\newcommand{\G}{\mathcal G}
\newcommand{\D}{\mathbb D}
\theoremstyle{plain}
\newtheorem{theorem}{Theorem}[section]
\newtheorem*{theorem*}{Theorem}
\newtheorem{proposition}[theorem]{\bf Proposition}
\newtheorem{lemma}[theorem]{Lemma}
\theoremstyle{definition}
\theoremstyle{remark}
\newtheorem{remark}[theorem]{\bf Remark}
\newtheorem{example}[theorem]{\bf Example}
\begin{document}

\title[Homomorphism between uniform algebras over a Hilbert space.]{A look into homomorphisms between uniform algebras over a Hilbert space.}
	
\author[V. Dimant]{Ver\'onica Dimant}

\address{Departamento de Matem\'{a}tica y Ciencias, Universidad de San
	Andr\'{e}s, Vito Dumas 284, (B1644BID) Victoria, Buenos Aires,
	Argentina and CONICET} \email{vero@udesa.edu.ar}

\author[J. Singer]{Joaqu\'{\i}n Singer}

\address{Departamento de Matem\'{a}tica, Facultad de Ciencias Exactas y Naturales, Universidad de Buenos Aires, (1428) Buenos Aires,
	Argentina and IMAS-CONICET} \email{jsinger@dm.uba.ar}
	
\thanks{Partially supported by Conicet PIP 11220130100483  and ANPCyT PICT 2015-2299 }

\subjclass[2020]{46J15, 46E50,  32A38}
\keywords{spectrum, algebras of holomorphic functions, homomorphisms of algebras}

\begin{abstract}
We study the vector-valued spectrum $\mathcal{M}_{u,\infty}(\B,\B)$ which is the set of nonzero algebra homomorphisms from $\A_u(\B)$ (the algebra of uniformly continuous holomorphic functions on $\B$) to $\mathcal {H}^\infty(\B)$ (the algebra of bounded holomorphic functions on $\B$). This set is naturally projected onto the closed unit ball of $\mathcal {H}^\infty(\B, \ell_2)$ giving rise to an associated fibering. Extending the classical notion of cluster sets introduced by I. J. Schark (1961) to the vector-valued spectrum we define vector-valued cluster sets. The aim of the article is to look at the relationship between fibers and cluster sets obtaining results regarding the existence of analytic balls into these sets.
\end{abstract}

\maketitle

\section{Introduction}

The discussions between eight renowned mathematicians at the Conference on Analytic Functions (Institute for Advanced Studies, 1957) gave rise to a common article published in 1961 under the pseudonym I. J. Schark \cite{Schark}. The object of study was the \textsl{maximal ideal space} (or \textsl{spectrum}) of the algebra $\mathcal {H}^\infty(\D)$ of bounded holomorphic functions over the complex unit disk. The results obtained, the questions raised and the approach considered in that article have inspired the work of many researchers since then. 

The description of fibers and cluster sets of the spectrum proposed by I. J. Schark was later studied (by several authors) for algebras of bounded analytic functions on the unit ball of an \textsl{infinite dimensional} Banach space. Even if the infinite dimensional framework is linked to a more complex picture of the spectrum, the essence of I. J. Schark's point of view has continued guiding the research.
One more step in this road has appeared at considering this perspective for a \textsl{vector-valued spectrum}; and here is where our contribution fits in.  To be more precise we need to introduce some definitions and notation.

Let $X$ be a complex Banach space with open unit ball $B_X$. We denote by $\A_u(B_X)$ the Banach algebra of holomorphic functions $f:B_X \to \C$ that are uniformly continuous on $B_X$. This is a sub-algebra of $\mathcal {H}^\infty(B_X)$, the Banach algebra of all bounded holomorphic mappings on $B_X$ with the supremum norm on $B_X$. The (scalar-valued) spectrum of this uniform algebra is the set $\M_\infty(B_X)=\{\varphi: \mathcal {H}^\infty(B_X)\to\mathbb C$ nonzero algebra homomorphisms$\}$.
Since $X^*$ is contained in $\mathcal {H}^\infty(B_X)$, there is a natural projection $\pi: \M_\infty(B_X)\to\overline B_{X^{**}}$ given by $\pi(\varphi)=\varphi|_{X^*}$. Through this projection, $\M_\infty(B_X)$ has a fibered structure: for each $z\in \overline B_{X^{**}}$ the set $\pi^{-1}(z)=\{\varphi\in \M_\infty(B_X):\ \pi(\varphi)=z\}$ is called the \textit{fiber} over $z$. The spectrum  $\M_u(B_X)=\{\varphi: \A_u(B_X)\to\mathbb C$ nonzero algebra homomorphisms$\}$ is similarly projected onto $\overline B_{X^{**}}$ and the fibering over this set is consequently defined. 

Our particular interest is to study the homomorphisms from the uniform algebra $\A_u(\B)$ to $\mathcal {H}^\infty(\B)$ as a whole set by studying the vector-valued spectrum $\M_{u,\infty}(\B,\B)$ defined as
\begin{align*}
\M_{u,\infty}(\B,\B) = \{ \Phi: \A_u(\B) \to \mathcal {H}^\infty(\B) \textrm{ non-zero algebra homomorphism}\}.
\end{align*}

The uniform algebras $\A_u(B_X)$ and $\mathcal {H}^\infty(B_X)$ are typical settings in Infinite dimensional Holomorphy. Their corresponding spectrums $\M_u(B_X)$ and $\M_\infty(B_X)$ are studied, with fibers \cite{AronColeGamelin,ColeGamelinJohnson,Farmer,AronFalcoGarciaMaestre,ChoiFalcoGarciaJungMaestre} and cluster sets \cite{AronCarandoLassalleMaestre,AronCarandoGamelinLassalleMaestre,AlvesCarando} being of particular interest.
Besides the study of specific algebras and their spectrums, homomorphisms between uniform algebras have been a subject of research in both the finite \cite{GorkinMortiniSuarez,MacCluerOhnoZhao} and the infinite dimensional setting \cite{GalindoGamelinLindstrom, AronGalindoLindstrom}. Through the vector-valued spectrum, originally defined in \cite{DiGaMaSe}, we are able to study homomorphisms between uniform algebras as a whole set and analyze the structure of this set analogously as it is done for the scalar-valued spectrum. Focusing in the infinite dimensional case, the infinite dimensional polydisk $B_{c_0}$ and the euclidean ball $\B$ appear as natural domains to extend the finite-dimensional study of Banach algebras of holomorphic mappings over $\D^n$ and the euclidean ball of  $\mathbb C^n$. Having worked on the structure of the vector valued spectrum for $X = c_0$ in \cite{DimantSinger2} we now turn our attention to $\ell_2$. In contrast to $c_0$, where scalar-spectrum $\M_u(B_{c_0})$ is simple (it can be identified with $B_{\ell_\infty})$, here the spectrum $\M_u(\B)$ is more complex, which relates to the number of homomorphisms from $\A_u(\B)$ to $\mathcal {H}^\infty(\B)$. As a result, $\M_{u,\infty}(\B,\B)$ not only provides some information about homomorphisms from $\mathcal {H}^\infty(\B)$ to itself but  also it has its own interest. 

In this paper we study  fibers and  cluster sets for the vector-valued spectrum $\M_{u,\infty}(\B,\B)$ and how they are related. We present sufficient conditions for these sets to be either isolated or to contain analytic copies of \textit{balls}.





\subsection{Fibers}

As every function in $\A_u(\B)$ can be continuously extended to $\overline{B}_{\ell_2}$, each function $g \in \overline{B}_{\mathcal {H}^\infty(\B,\ell_2)}$ naturally produces a composition homomorphism $C_g \in \M_{u,\infty}(\B,\B)$ given by 
\begin{align*}
    C_g(f) = f \circ g.
\end{align*}
Conversely, as in \cite{DiGaMaSe} we can define a projection 
\begin{align*}
    \xi: \M_{u,\infty}(\B,\B) &\to \mathcal {H}^\infty(\B,\ell_2)\\
    \Phi &\mapsto [x \mapsto (\Phi(\langle \cdot, e_n\rangle)(x))_n].
\end{align*}

Since $\xi(C_g) = g$ for all $g \in \overline{B}_{\mathcal {H}^\infty(\B,\ell_2)}$, the image of this projection is $\overline{B}_{\mathcal {H}^\infty(\B,\ell_2)}$. However, there might be additional elements $\Phi \in \M_{u,\infty}(\B,\B)$ such that $\xi(\Phi) = g$. The set $\mathscr{F}(g) = \xi^{-1}(g)$ is called the fiber over $g$. Section 2 is devoted to the picture of such fibers. There we show that the fiber over any function $g \in B_{\mathcal {H}^\infty(\B.\ell_2)}$ is large, in fact it contains an analytical copy of a ball. The remaining fibers (i.e. over functions with norm 1) are more nuanced, so we present conditions under which we can still find an analytical copy of a ball  and also we describe singleton fibers.

\subsection{Cluster sets}

It was shown in \cite{KungFu} that $\mathcal {H}^\infty(\B)$ is a dual space with a predual that we denote with $\G^\infty(\B)$, keeping the notation from \cite{Mujica}. The vector-valued spectrum $\M_{u,\infty}(\B,\B)$ can then be viewed as a subset of the unit sphere of 
\begin{align*}
    \mathcal L(\A_u(\B), \mathcal {H}^\infty(\B)) = \left(A_u(\B)\widehat\otimes_\pi \mathcal G^\infty (\B)\right)^*.
\end{align*}
We endow $\M_{u,\infty}(\B,\B)$ with the the weak-star topology from $\left(A_u(\B)\widehat\otimes_\pi \mathcal G^\infty (\B)\right)^*$ that satisfies
\begin{align}
\label{weak-star}
\Phi_\alpha \to \Phi \text{ whenever } \Phi_\alpha(f)(x) \to \Phi(f)(x) \text{ for all $x \in \B$ and $f \in \A_u(\B)$.}
\end{align} It is then readily seen that $\M_{u,\infty}(\B,\B)$ is a weak-star closed (and thus compact) subset of the unit sphere of $\mathcal L(\A_u(\B), \mathcal {H}^\infty(\B)) = \left(A_u(\B)\widehat\otimes_\pi \mathcal G^\infty (\B)\right)^*$ (see \cite[Th. 11]{DiGaMaSe}
 or \cite[p. 3]{DimantSinger}). 
 
 For vector-valued cluster sets we focus on a particular case of the limits considered in \eqref{weak-star}: those obtained for a fixed function $f$ whenever $\Phi_\alpha$ are composition homomorphisms.
 
 Let $g\in \overline{B}_{\mathcal {H}^\infty(\B,\ell_2)}$, $f \in \A_u(\B)$. We define the vector-valued cluster set of $f$ over $g$ as 

\begin{align*}
    \mathcal{C\ell}_{\M_{u,\infty}(\B,\B)}(f,g) = \{ h \in \mathcal {H}^\infty(\B) \colon \exists \ (g_\alpha) \subset B_{\mathcal {H}^\infty(\B,\ell_2)}, g_\alpha \xrightarrow{w^*}g \\\text{ satisfying } C_{g_\alpha}(f)(y) \to h(y) \ \forall y \in \B\}.
\end{align*}
Here $ g_\alpha \xrightarrow{w^*}g$ denotes the weak-star convergence associated to the identification
\begin{align*}
    \mathcal {H}^\infty(\B,\ell_2) = \mathcal{L}(\mathcal{G}^\infty(\B),\ell_2)  = (\mathcal{G}^\infty(\B) \widehat\otimes_\pi \mathcal \ell_2)^*,
\end{align*}
where the fist equality was proved in \cite{Mujica}.
 These sets are the vector-valued counterpart to the (scalar-valued) cluster sets ${C\ell}_{B_X}(f,z_0)$ for $f \in \mathcal {H}^\infty(B_X)$ and $z_0 \in \overline B_X$ defined as
\begin{align*}
    {C\ell}_{B_X}(f,z_0) = \{ \lambda \in \C \colon \exists \ (z_\alpha) \subset \B, z_\alpha \xrightarrow{w^*} z_0 \text{ satisfying } f(z_\alpha) \to \lambda\}.
\end{align*}
The study of cluster sets and cluster values for the complex unit disk $\D$ was initiated  in \cite{Schark}. This work linked the set of cluster points at a boundary point of $\D$ for a function $f \in \mathcal {H}^\infty(\D)$ with the set of evaluations $\varphi(f)$ for $\varphi$ in the scalar-valued spectrum.  
This topic in the infinite dimensional setting have been recently developed in \cite{AronCarandoGamelinLassalleMaestre,AronCarandoLassalleMaestre,JohnsonOrtegaCastillo,OrtegaCastilloPrieto, AlvesCarando}.
In section 3 we analyze the vector-valued version of cluster sets with particular interest in the size and analytical structure of such sets.

\section{Fibers in $\M_{u,\infty}(\B,\B)$}
\label{FiberSection}

Recall from the introduction that $\M_u(\B)$ is the scalar-valued spectrum 
\begin{align*}
    \M_u(\B) = \{ \varphi: \A_u(\B) \to \C \text{ continuous algebra homomorphism}\} \setminus \{0\}.
\end{align*}
In this setting, the projection $\pi$ can be computed as
\begin{align}
    \pi: \M_u(\B) &\to \ell_2 \nonumber \\
    \pi(\varphi) &= \left(\varphi(\langle \cdot , e_n \rangle)\right)_n \label{pi}.
\end{align}
 A lot of progress has been made towards describing the structure of the fibers of this spectrum. It was shown in \cite{Farmer} (see also \cite{AronCarandoGamelinLassalleMaestre}) that the fibers over elements $z \in S_{\ell_2}$ are singleton sets. For fibers over elements $z \in \B$ the situation is completely different. Cole Gamelin and Johnson proved in \cite{ColeGamelinJohnson} that the fiber over $0$ contains an analytical copy of $B_{\ell_\infty}$. This result was later extended to every $z \in \B$ in \cite{ChoiFalcoGarciaJungMaestre}. With these results in mind we focus on the vector-valued spectrum $\M_{u,\infty}(\B,\B)$ to provide a description of its fibered structure over $\overline B_{\mathcal {H}^\infty(\B,\ell_2)}$.
 
When, in \cite{DimantSinger}, we tackled the study of the fibers for the vector-valued spectrum $\M_\infty(B_X,B_Y)$ (nonzero algebra homomorphisms from $\mathcal {H}^\infty (B_X)$ to $\mathcal {H}^\infty (B_Y)$), we proposed a division into what we called \textit{interior, middle} and \textit{edge fibers}. This was based on both the norm of the function over which we were projecting and whether or not the corresponding composition homomorphism could be defined. In our current setting of $\M_{u,\infty}(\B,\B)$ we noted in the introduction that composition homomorphisms $C_g$ can be defined for \textbf{every} $g \in \overline{B}_{\mathcal {H}^\infty(\B,\B)}$. It is, however, still worth to keep a distinction of \textit{interior, middle} and \textit{edge} fibers due to the different behaviour of elements belonging to each of these classes.
They are described as follows

\begin{itemize}
     \item Interior fibers: Fibers over functions $g$ with $\|g\| < 1$ (and thus  $g(\B) \subset \B$).
    \item Middle fibers: Fibers over functions $g$ with $\|g\| = 1$  and $g(\B)\subset \B$.
    \item Edge fibers: Fibers over constant functions $g$ with $|g(x)| = 1$  for all $x\in\B$.
\end{itemize}

Note that for a holomorphic function $g \in \overline{B}_{\mathcal {H}^\infty(\B,\ell_2)}$ if $g(x)\in S_{\ell_2}$ then $g(\B)\subset S_{\ell_2}$. Also,
since $\ell_2$ is strictly convex, this implies that $g$ is a constant function.

The logic behind this split is that we want to obtain new homomorphisms in the fiber over $g$ by taking limit of composition homomorphisms that are in some sense close to $C_g$ (that is, such that the projection of the limit is still $g$). Because of the way $\| \cdot \|_2$ works, moving from \textit{interior} to \textit{edge} fibers we get less room to work and thus this distinction is then reflected in the results regarding the size of the fibers for each case. As we see next, in one extreme we can inject a ball in any interior fiber while the opposite is true for edge fibers, we show that they are always singleton sets. Middle fibers, being a transition from the interior to the edge, are the most complex case. They can be isolated, they can contain an analytic copy of a ball (we exhibit conditions for both situations) but we do not know if there could be a third possibility.

\subsection{Interior Fibers}

We have shown in \cite{DimantSinger,DimantSinger2} that under certain circumstances we can construct analytical injections for the vector-valued spectrum by building upon analytical injections for the scalar valued case. With that in mind let us take a moment to recall what is known for the scalar-valued analogue of interior fibers, that is, scalar-valued fibers over elements in the interior of the unit ball $\B$. In that direction we have the following result from Cole, Gamelin and Johnson.

\begin{theorem}{\cite[Th. 6.1]{ColeGamelinJohnson}} Suppose $X$ has a normalized basis $\{x_j\}$ which is shrinking, that is, whose associated coefficient functionals $\{L_j\}$ have linear span dense in $X^*$. Suppose furthermore that there is an integer $N \geq 1$ such that
\begin{align*}
    \sum_{j=1}^\infty |L_j(x)|^N < \infty
\end{align*}
for all $x = \sum_{j=1}^\infty L_j(x)x_j$ in $X$. Then there is an analytic injection of the polydisk $B_{\ell_\infty}$ into the fiber over 0 in $\M_\infty(B_X)$.
\end{theorem}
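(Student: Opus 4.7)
The plan is to build, for each $w \in B_{\ell_\infty}$, a homomorphism $\varphi_w \in \M_\infty(B_X)$ in the fiber $\pi^{-1}(0)$, with $w \mapsto \varphi_w$ analytic and injective. I would realize $\varphi_w$ as an ultrafilter limit of point-evaluations at a sequence $y_n(w) \in B_X$ that depends analytically on $w$ and is weakly null, so that the $L_j$-coordinates of the limiting homomorphism vanish.

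First I would upgrade the hypothesis: by the shrinking property, $X^* = \overline{\mathrm{span}}\{L_j\}$, so a bounded sequence converges weakly to $0$ iff every $L_j$ does. The pointwise $N$-summability refines, via the uniform boundedness principle, to
$$M \;:=\; \sup_{x \in B_X}\, \sum_{j=1}^\infty |L_j(x)|^N \;<\; \infty.$$
In particular, every diagonal $N$-homogeneous polynomial $Q_\beta(x) = \sum_j \beta_j L_j(x)^N$ belongs to $\mathcal{H}^\infty(B_X)$ with $\|Q_\beta\| \le M\|\beta\|_\infty$; this family will serve as the separator in the injectivity step.

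Fix a free ultrafilter $\mathcal{U}$ on $\mathbb{N}$, and choose weights $\theta_{n,j}$ and scalars $\lambda_n$ so that
$$y_n(w) \;=\; \lambda_n \sum_{j\ge 1} \theta_{n,j}\, w_j\, x_j$$
satisfies (i) $\|y_n(w)\|_X \le 1$ uniformly for $w \in B_{\ell_\infty}$, using $M$, and (ii) $L_j(y_n(w)) = \lambda_n \theta_{n,j} w_j \to 0$ for each fixed $j$. Two natural candidates are geometrically decaying weights $\theta_{n,j} = r_n^j$ with $r_n \uparrow 1$ (Abel-averaging style), or disjointly supported blocks $\theta_{n,j} = \mathbf{1}_{j \in I_n}$ for blocks $I_n$ with $|I_n| \to \infty$. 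Setting $\varphi_w(f) = \lim_{n \to \mathcal{U}} f(y_n(w))$ yields a bounded nonzero algebra homomorphism as a pointwise $\mathcal{U}$-limit of point-evaluations, and the decay of $L_j(y_n(w))$ forces $\varphi_w(L_j) = 0$, so $\pi(\varphi_w) = 0$.

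Analyticity of $w \mapsto \varphi_w$ is handled via polynomial density in $\A_u(B_X)$ together with the uniform bound $\|\varphi_w\| \le 1$: on polynomials, $\varphi_w(P)$ is a $\mathcal{U}$-limit of polynomials in the $w_j$'s and is therefore analytic in $w$, and density extends this to all of $\A_u(B_X)$. The real obstacle is \emph{injectivity}: because $\varphi_w$ kills every monomial in the individual $L_j$'s, distinct $w \ne w'$ must be separated by genuinely global polynomials, and here the family $\{Q_\beta\}$ comes in through
$$\varphi_w(Q_\beta) \;=\; \lim_{n \to \mathcal{U}} \lambda_n^N \sum_{j\ge 1} \theta_{n,j}^N\, \beta_j\, w_j^N.$$
The technical heart of the theorem is to tune $\theta_{n,j}$, $\lambda_n$ and $\mathcal{U}$ so that, as $\beta$ ranges over a sufficiently rich class (e.g.\ $\ell_1$), the values $\{\varphi_w(Q_\beta)\}_\beta$ determine $w$. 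The $N$-summability hypothesis is precisely what keeps both sides of this balancing act feasible---ensuring that $y_n(w)$ stays in $B_X$ on the one hand, and furnishing an abundance of bounded separating polynomials $Q_\beta$ on $B_X$ on the other.
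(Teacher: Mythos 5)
Note first that the paper does not itself prove this theorem: it quotes it from Cole--Gamelin--Johnson \cite{ColeGamelinJohnson}, and the nearest internal proof to compare against is Lemma~\ref{iny-escalar}, which runs the analogous ultrafilter-limit-of-evaluations construction for $X=\ell_2$. Your outline sits in the right family of ideas, but two of its steps are not technicalities to be delegated; they are where the theorem actually lives, and as written neither goes through. For the norm bound~(i): the hypothesis $\sum_j|L_j(x)|^N<\infty$ plus closed graph makes the coordinate map $J\colon X\to\ell_N$, $J(x)=(L_j(x))_j$, bounded, and your $M=\|J\|^N$ controls $\sum_j|L_j(x)|^N$ in terms of $\|x\|$. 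That is exactly what you need to bound the test polynomials $Q_\beta$ on $B_X$, but it says nothing about bounding $\bigl\|\sum_j a_j x_j\bigr\|_X$ in terms of the coefficients --- that would be a bound on the inverse map $\ell_N\to X$, which does not follow. Without some additional upper estimate for (a subsequence of) the basis, the only available bound is $\bigl\|\sum_{j\in I_n}w_jx_j\bigr\|\le 2K|I_n|$ from the basis constant, which forces $\lambda_n$ of order $1/|I_n|$; but then $\varphi_w(Q_\beta)=\lim_{\mathscr{U}}\lambda_n^N\sum_{j\in I_n}\beta_jw_j^N=0$ for every fixed $\beta\in\ell_1$, and every separating polynomial you built sees nothing. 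So ``using $M$'' does not supply step (i).

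For injectivity, the family $\{Q_\beta:\beta\in\ell_1\}$ you invoke is not enough, for the reason just computed: a fixed $\beta\in\ell_1$ loses contact with blocks $I_n$ that march to infinity. What actually works --- both in \cite{ColeGamelinJohnson} and in the paper's Lemma~\ref{iny-escalar} --- is the geometric family $\beta_j=\lambda^j$ for $\lambda\in\overline{\D}$: after the shift by $k$ the ultrafilter limit produces the factor $\chi(\lambda)=\lim_{\mathscr{U}}\lambda^{k}$, which is $0$ for $|\lambda|<1$ but has modulus $1$ on $\partial\D$, so the block contribution survives precisely on the boundary circle. One then reads off the coefficients $w_j^N$ from the boundary values of a function holomorphic in $\lambda$ on $\D$ and continuous on $\overline{\D}$ (Taylor/Fourier uniqueness), and resolves the resulting $N$-th-root ambiguity by rerunning the argument with $(N+1)$-st powers. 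That boundary-circle device is the missing idea; absent it, ``tune $\theta_{n,j}$, $\lambda_n$ and $\mathscr{U}$ so that $\varphi_w(Q_\beta)$ determines $w$'' is a statement of the goal rather than a proof of it.
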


While not in the original statement, it is readily seen that the proof of this result applies without modification to the spectrum $\M_u(\B)$. Further, we know from \cite[Ex. 1.8]{AronDimantLassalleMaestre} that via composition with the mapping
\begin{align*}
    \beta_x(y) = \frac{1}{1+\sqrt{1 - \|x\|^2}}\left\langle \frac{x - y}{1 - \langle y, x \rangle},x\right\rangle x + \sqrt{1 - \|x\|^2}\frac{x - y}{1 - \langle y, x \rangle}
\end{align*}
we can obtain an isometric isomorphism (in the metric given by being a subset of $\A_u(\B)^*$) between the fibers $\pi^{-1}(x)$ and $\pi^{-1}(0)$ for all $x \in \B$. As a consequence, the fiber over each $x \in \B$ contains an analytic copy of $B_{\ell_\infty}$. However, we can not take advantage of this construction for the vector-valued case. The main reason being that the mappings $\beta_x$ are not ``smooth'' on $x$. Still, we can manage to get analytic injections  in the vector-valued case. For that, we first prove a new embedding result for the scalar-valued spectrum in the spirit of \cite[Th. 6.7]{ColeGamelinJohnson}. Then, we make use of this to show  how the defining property of the interior fibers ($\|g\| < 1$) ensures the injection of an analytic copy of the ball $B_{\mathcal {H}^\infty(B_{\ell_2},\ell_2)}$.

As usual, for the scalar-valued spectrum $\M_u(\B)$ we denote by $\delta_z$ the evaluation mapping given by $\delta_z(f)=f(z)$ for $f\in\A_u(\B)$ and $z\in\overline{B}_{\ell_2}$.

\begin{lemma}\label{iny-escalar}
For any $0<r<1$ there exists an analytic injection $\Phi_r:r\B\times \B\to \M_u(\B)$ such that the image of $\Phi_r(w,\cdot)$ is contained in the fiber $\pi^{-1}(w)$.
\end{lemma}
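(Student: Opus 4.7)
My plan is to bootstrap the Cole--Gamelin--Johnson analytic injection of $B_{\ell_\infty}$ into $\pi^{-1}(0) \subset \M_u(\B)$ (the content of \cite[Th.~6.1]{ColeGamelinJohnson}, which the excerpt already notes applies verbatim to $\M_u(\B)$) into a two-parameter analytic family by pre-composing with an affine contraction indexed by $w$. Let $\psi : B_{\ell_\infty} \to \pi^{-1}(0) \subset \M_u(\B)$ denote this injection. For each $w \in r\B$, consider the affine map $T_w(y) := w + (1-r) y$; the triangle inequality gives $\|T_w(y)\| \le \|w\| + (1-r)\|y\| < r + (1-r) = 1$, so $T_w$ sends $\B$ into $\B$, and hence $F_w : f \mapsto f \circ T_w$ defines a continuous unital algebra homomorphism of $\A_u(\B)$. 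I propose the formula
\[
\Phi_r(w, z) := \psi(z) \circ F_w, \qquad (w, z) \in r\B \times \B,
\]
with $\psi(z)$ understood via the continuous inclusion $\B \hookrightarrow B_{\ell_\infty}$. Being a composition of nonzero algebra homomorphisms, $\Phi_r(w, z) \in \M_u(\B)$.

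The fiber condition is immediate: $F_w(\langle \cdot, e_k\rangle) = w_k + (1-r) \langle \cdot, e_k\rangle$, and $\psi(z)$ is unital and annihilates the coordinate functionals, so $\Phi_r(w, z)(\langle \cdot, e_k\rangle) = w_k$ and thus $\pi(\Phi_r(w, z)) = w$. Joint analyticity is obtained by verifying that $w \mapsto F_w(f)$ is analytic from $r\B$ into $\A_u(\B)$ for each fixed $f \in \A_u(\B)$ (it is pointwise, i.e.\ $y$-wise, holomorphic in $w$ since $T_w(y)$ is affine in $w$ and $f$ is holomorphic, and uniformly bounded by $\|f\|_\infty$, which forces Fr\'echet analyticity into $\A_u(\B) \subset C(\overline{\B})$), and then composing with the CGJ-analytic map $z \mapsto \psi(z)$ via the continuous bilinear duality pairing $\A_u(\B)^* \times \A_u(\B) \to \C$; combined with the uniform bound $\|\Phi_r(w, z)\| \le 1$, this yields the analyticity of $\Phi_r$ into $\A_u(\B)^*$.

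For injectivity, suppose $\Phi_r(w_1, z_1) = \Phi_r(w_2, z_2)$. Applying $\pi$ first forces $w_1 = w_2 =: w$; then $\psi(z_1)$ and $\psi(z_2)$ must coincide on the image $F_w(\A_u(\B))$. Since $T_w$ extends to an affine isomorphism of $\ell_2$ onto itself, pre-composition with $T_w$ carries the restrictions of continuous polynomials bijectively to themselves (given a polynomial $P$, the polynomial $Q(x) := P\bigl((x-w)/(1-r)\bigr)$ satisfies $F_w(Q) = P$); by density of polynomials in $\A_u(\B)$, which is valid for $\ell_2$, the image $F_w(\A_u(\B))$ is dense in $\A_u(\B)$, whence $\psi(z_1) = \psi(z_2)$ on all of $\A_u(\B)$ and the injectivity of $\psi$ finishes the argument. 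The subtle point is the choice of contraction factor: using the constant $1-r$, rather than the $w$-dependent $\sqrt{1-\|w\|^2}$ one sees in the M\"obius maps $\beta_x$ discussed above, keeps $T_w$ affine and hence holomorphic in $w$ while still confining $T_w(\B)$ to $\B$ uniformly on $r\B$; this is exactly what lets the CGJ injection pass through into each interior fiber without disturbing the analytic structure.
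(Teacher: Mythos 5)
Your proof is correct, but it takes a genuinely different route from the paper's. The paper builds $\Phi_r$ from scratch: it defines finite‑dimensional approximants $s_k(w,z)=\sum_{j\le k}\langle w,e_j\rangle e_j+(1-r)\sum_j\langle z,e_j\rangle e_{k+j}$, sets $\Phi_k(w,z)=\delta_{s_k(w,z)}$, and takes a weak‑star ultrafilter limit; injectivity is then checked by hand using the test functions $f_\lambda(x)=\sum_j\lambda^j x_j^2$ (and $\sum_j\lambda^j x_j^3$). You instead treat the Cole--Gamelin--Johnson injection $\psi\colon B_{\ell_\infty}\to\pi^{-1}(0)$ as a black box and pull it into each fiber $\pi^{-1}(w)$ by precomposing with the affine contraction $T_w(y)=w+(1-r)y$. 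Your key observation --- that fixing the scale factor at $1-r$ instead of $\sqrt{1-\|w\|^2}$ keeps the family $\{T_w\}$ holomorphic in $w$ while still sending $\B$ into $\B$ uniformly for $w\in r\B$ --- is exactly the obstruction the paper raises against the M\"obius maps $\beta_x$, and it is a clean way around it. The trade‑offs: the paper's construction is self‑contained and its ultrafilter scaffolding is reused verbatim for the vector‑valued Theorem~\ref{inyVectorial} (the scalar Lemma~\ref{iny-escalar} and Theorem~\ref{inyVectorial} are deliberately parallel), whereas your argument is shorter and modular but imports two standard facts that deserve explicit mention: the density of continuous polynomials in $\A_u(\B)$ (needed for the injectivity step, where you reduce $\psi(z_1)\circ F_w=\psi(z_2)\circ F_w$ to $\psi(z_1)=\psi(z_2)$), and the fact that a bounded, weak‑star holomorphic map into a dual Banach space is norm‑holomorphic (needed to upgrade the CGJ map $z\mapsto\psi(z)$ and then compose with the bilinear pairing, or alternatively one can run a Hartogs‑plus‑boundedness argument on the scalar function $(w,z)\mapsto\psi(z)(F_w(f))$). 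Both facts are standard, so the proof stands; it just exchanges the paper's explicit ultrafilter computation for reliance on CGJ plus two pieces of vector‑valued holomorphy folklore.
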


\begin{proof}
For each $k \in \N$ we define $s_{k} :r\B\times \B\to \B $ as
\begin{align*}
    s_{k} (w,z) = \sum_{j=1}^k \langle w, e_j \rangle e_j +  (1-r)\left(\sum_{j=1}^\infty\langle z,e_j\rangle  e_{k+j}\right)
\end{align*}
and $\Phi_k:r\B\times \B\to \M_u(\B)$ as $\Phi_k(w,z)=\delta_{s_k(w,z)}$.

We now fix a free ultrafilter $\mathscr{U}$ containing the sets $\{k: k\geq k_0\}$ and define  the mapping $\Phi_r:r\B\times \B\to \M_u(\B)$ where, for $(w,z)\in r\B\times \B$, $\Phi_r(w,z)$ is the weak star limit along $\mathscr{U}$ of $(\Phi_k(w,z))_k$. Note that the limit along the ultrafilter exists because $\M_u(\B)$ is a weak-star compact subset of $(A_u(\B))^*.$ Then, we have, for $(w,z)\in r\B\times \B$ and $f\in\A_u(\B)$,
\begin{align*}
    \Phi_r(w,z)(f) = \lim_{\mathscr{U}} \Phi_k(w,z)(f).
\end{align*}

We now move on to show that $\Phi_r:r\B\times \B\to \M_u(\B)$ is an analytic injection, mapping $\{w\}\times\B$ into the fiber over $w$.

To see that the image of $\Phi_r(w,\cdot)$ is contained in $\pi^{-1}(w)$ we fix $z\in\B$, $n \in \N$. We then have for every $k \geq n$ the equality
\begin{align*}
    \Phi_k(w,z)(\langle \cdot, e_n \rangle) = w_n.
\end{align*}
This implies that $\Phi_r(w,z)(\langle \cdot, e_n \rangle) = w_n$ for all $n$, concluding that $\Phi_r(w,z)\in \pi^{-1}(w)$.

Let us now see that $\Phi_r$ is analytic. For such purpose, we fix $f \in \A_u(\B)$ and we have to prove that the mapping $\widehat f\circ\Phi_r:r\B\times \B\to\C$ given by $[(w,z)\mapsto \Phi_r(w,z)(f)]$ is holomorphic.

Since the sequence $(\widehat f\circ \Phi_k)_k$ is contained in the  weak-star compact set $\|f\|\overline{B}_{\mathcal {H}^\infty(r\B\times\B)}$ it should have a  weak-star limit along the ultrafilter $\mathscr{U}$. This limit  should coincide with $\widehat f\circ\Phi_r$, meaning that this mapping belongs to $\|f\|\overline{B}_{\mathcal {H}^\infty(r\B\times\B)}$ and so $\widehat f\circ\Phi_r$ is analytic.


To verify the injectivity of $\Phi_r$, note first that when $w\not= w'$ we have $\Phi_r(w,z)\not= \Phi_r(w',z)$  since they belong to different fibers. Now, 
inspired by \cite[Th. 3.1]{ChoiFalcoGarciaJungMaestre}, we consider for each $\lambda \in \overline{\D}$ the test functions $f_\lambda(x) = \sum_{j=1}^\infty \lambda^j x_j^2$, so that
\begin{align*}
    \Phi_{k}(w,z)(f_\lambda)= \sum_{j=1}^{k} \lambda^j \langle w, e_j \rangle^2 + \lambda^{k} \sum_{j=1}^\infty \lambda^j((1-r)\langle z, e_j \rangle)^2.
\end{align*}
Let $\chi(\lambda) = \lim_{\mathscr{U}} \lambda^{k}$ which exists for all $\lambda \in \overline \D$. We can then compute 
\begin{align}
\label{Composition}
    \Phi_r(w,z)(f_\lambda)=  \sum_{j=1}^\infty \lambda^j \langle w, e_j \rangle^2 + \chi(\lambda)\sum_{j=1}^\infty \lambda^j((1-r)\langle z, e_j \rangle)^2.
\end{align}

As a consequence, for $\lambda \in \partial \D$ we get that   $\Phi_r(w,z)(f_\lambda) = \Phi_r(w,z')(f_\lambda)$ if and only if
\begin{align*}
    \sum_{j=1}^\infty \lambda^j ((1-r)\langle z, e_j \rangle)^2 = \sum_{j=1}^\infty \lambda^j ((1-r)\langle z', e_j \rangle)^2.
\end{align*}
Since $F(\lambda) = \sum_{j=1}^\infty \lambda^j ((1-r)\langle z, e_j \rangle)^2$ is a holomorphic function in $\D$, continuous in $\overline{\D}$ and coincides with $F'(\lambda) = \sum_{j=1}^\infty \lambda^j ((1-r)\langle z', e_j \rangle)^2$ over $\partial \D$, the Taylor expansion for both functions must coincide and thus $(\langle z, e_j \rangle)^2 = (\langle z', e_j \rangle)^2$ for all $j \in \N$. Repeating the argument with $\sum_{j=1}^\infty \lambda^j x_j^3$ yields the equality $z = z'$, showing that $\Phi_r$ is in fact an analytic injection. 
\end{proof}

\begin{theorem}
\label{inyVectorial}
For every $g \in B_{\mathcal {H}^\infty(B_{\ell_2,\ell_2})}$ there is an analytic injection 
$$\Psi_g: B_{\mathcal {H}^\infty(B_{\ell_2},\ell_2)} \to \mathscr{F}(g).$$
\end{theorem}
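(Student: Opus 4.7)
The plan is to lift Lemma~\ref{iny-escalar} to the vector-valued setting by composing pointwise with the data $g$ and $h$. Given $g \in B_{\mathcal{H}^\infty(\B,\ell_2)}$, fix any $r$ with $\|g\|_\infty < r < 1$, so that $g(x) \in r\B$ for every $x \in \B$. For $h \in B_{\mathcal{H}^\infty(\B,\ell_2)}$ I would set
$$\Psi_g(h)(f)(x) := \Phi_r\bigl(g(x),h(x)\bigr)(f), \qquad f \in \A_u(\B),\ x \in \B,$$
where $\Phi_r : r\B \times \B \to \M_u(\B)$ is the analytic injection provided by the lemma.

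First I would verify that $\Psi_g(h)$ is a bounded algebra homomorphism from $\A_u(\B)$ to $\mathcal{H}^\infty(\B)$ lying in $\mathscr{F}(g)$. The algebra homomorphism property and the pointwise bound $|\Psi_g(h)(f)(x)| \le \|f\|$ are inherited from the fact that each $\Phi_r(g(x),h(x))$ is an element of $\M_u(\B)$. The key point is holomorphicity of $x \mapsto \Psi_g(h)(f)(x)$ on $\B$, which follows by composing the holomorphic function $\widehat{f}\circ\Phi_r : r\B \times \B \to \C$ (obtained in the lemma) with the holomorphic map $x \mapsto (g(x),h(x))$. Finally, since $\Phi_r(w,\cdot)$ takes values in the scalar fiber over $w$,
$$\xi(\Psi_g(h))(x) = \bigl(\Phi_r(g(x),h(x))(\langle \cdot, e_n\rangle)\bigr)_n = g(x),$$
so indeed $\Psi_g(h) \in \mathscr{F}(g)$.

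It remains to show that $\Psi_g$ is analytic and injective. By the description \eqref{weak-star} of the weak-star topology on $\M_{u,\infty}(\B,\B)$, analyticity of $\Psi_g$ amounts to checking, for each $f \in \A_u(\B)$ and each $x \in \B$, that $h \mapsto (\widehat{f}\circ\Phi_r)(g(x),h(x))$ is holomorphic in $h \in B_{\mathcal{H}^\infty(\B,\ell_2)}$; this is the composition of the continuous linear evaluation $h \mapsto h(x)$ with the holomorphic scalar function $z \mapsto (\widehat{f}\circ\Phi_r)(g(x),z)$ on $\B$. For injectivity I would transplant the test-function argument from Lemma~\ref{iny-escalar} pointwise: if $\Psi_g(h) = \Psi_g(h')$, then at any fixed $x \in \B$ the identity $\Phi_r(g(x),h(x))(f_\lambda) = \Phi_r(g(x),h'(x))(f_\lambda)$ holds for $f_\lambda(y) = \sum_j \lambda^j y_j^2$ and every $\lambda \in \partial\D$; by formula \eqref{Composition} and the boundary-Taylor-coefficient argument used in the lemma, this yields $\langle h(x),e_j\rangle^2 = \langle h'(x),e_j\rangle^2$ for every $j$, and repeating with $\sum_j \lambda^j y_j^3$ upgrades this to $h(x) = h'(x)$. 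Since $x$ was arbitrary, $h = h'$.

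The only step that is not essentially bookkeeping is ensuring that $\Psi_g(h)(f)$ is genuinely holomorphic on $\B$ rather than merely well defined pointwise. This is exactly why the lemma was stated with $\Phi_r$ \emph{analytic} on the whole product $r\B \times \B$, and why the hypothesis $\|g\|_\infty < 1$ is essential: it keeps the pair $(g(x),h(x))$ inside the domain where joint holomorphicity is available, so the chain rule can be applied.
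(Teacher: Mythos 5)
Your proposal is correct and follows essentially the same route as the paper: define $\Psi_g(h)(f)(x) = \Phi_r(g(x),h(x))(f)$ with $\|g\| < r < 1$, check well-definedness and membership in $\mathscr{F}(g)$ via the analyticity and fiber-preserving property of $\Phi_r$, obtain analyticity of $\Psi_g$ by composing holomorphic maps, and deduce injectivity from that of $\Phi_r$. The only minor difference is that for injectivity the paper simply invokes the injectivity of $\Phi_r$ pointwise, whereas you re-run the test-function argument from Lemma~\ref{iny-escalar}; both are valid, the paper's being a bit more economical.
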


\begin{proof}
Given $g \in B_{\mathcal {H}^\infty(\B,\ell_2)}$ there exists $r >0$ such that $\|g\| < r < 1$. Considering $\Phi_r$ the mapping constructed in the previous lemma, we define  $\Psi_g: B_{\mathcal {H}^\infty(B_{\ell_2},\ell_2)} \to \mathscr{F}(g)$ by
$$
\Psi_g(h)(f)(x)=\Phi_r(g(x), h(x))(f)
$$
for each  $h \in B_{\mathcal {H}^\infty(\B,\ell_2)}$, $f\in\A_u(\B)$, $x\in\B$.

To see that this is well defined we  need to check that $\Psi_g(h)$ is an algebra homomorphism from $\A_u(\B)$ to $\mathcal {H}^\infty(\B)$ and that $\xi(\Psi_g(h))=g$. Note that from 
Lemma \ref{iny-escalar} we can derive that the mapping from $\B$ to $\M_u(\B)$ given by $[x\mapsto \Phi_r(g(x), h(x))]$ is analytic. As a consequence we have that $\Psi_g(h)(f)$ is holomorphic. Also, it is clear that is bounded resulting that $\Psi_g(h)(f)\in\mathcal {H}^\infty(\B)$. The fact that $\Psi_g(h)$ is an algebra homomorphism follows easily from Lemma \ref{iny-escalar}. Finally, we appeal again to Lemma \ref{iny-escalar} to produce the identity
$$
\xi\left(\Psi_g(h)\right)(x)= \pi \left(\Phi_r(g(x), h(x))\right)=g(x)
$$ which shows that $\Psi_g(h)\in \mathscr{F}(g)$.

The injectivity of $\Psi_g$ is also an easy consequence of the injectivity of $\Phi_r$. We finish by showing that $\Psi_g$ is analytic (i. e. each mapping $[h\mapsto \Psi_g(h)(f)(x)]$ from $B_{\mathcal {H}^\infty(\B,\ell_2)}$ to $\C$ is analytic). This is obtained through the following composition of analytic mappings (the second one due to Lemma \ref{iny-escalar}):
$$
h\mapsto (g(x), h(x))\mapsto \Phi_r(g(x), h(x))(f) = \Psi_g(h)(f)(x).
$$

\end{proof}

\begin{remark}
Note that $\Psi_g(0)=C_g$. For any  $h\not\equiv 0$ the injectivity of $\Psi_g$ along with the fact that $\xi(\Psi_g(h)) = g$ tell us that $\Psi_g(h)$ could not be a composition homomorphism. 
\end{remark}

\subsection{Edge Fibers} As we mentioned at the beginning of the section, \textit{edge fibers} are the polar opposite to \textit{interior fibers}. By relating the behaviour of the scalar-valued spectrum, particularly fibers over $S_{\ell_2}$, to that of edge fibers of the vector-valued spectrum we can show that the fiber over any constant function $g \in S_{\mathcal {H}^\infty(\B,\ell_2)}$ consists only of the corresponding composition homomorphism $C_g$. With that goal in mind we begin with the following remark giving another description of composition homomorphisms (cf \cite[Rmk. 3.1]{DimantSinger}).

\begin{remark}
\label{rmkCg}
Let $g \in \overline{B}_{\mathcal {H}^\infty(\B,\ell_2)}$ and $\Psi \in \M_{u,\infty}(B_{\ell_2}, B_{\ell_2})$. Then $\Psi = C_g$ if and only if $\delta_x \circ \Psi = [f \mapsto \Psi(f)(x)]$ coincides with $\delta_{g(x)}$ for all $x \in \B$.
\end{remark}

To complete our picture for the \textit{edge fibers} we need the following well-known result regarding fibers for the scalar spectrum $\M_u(\B)$.

\begin{lemma} \cite{Farmer}
\label{deltaAu}
For every $z \in S_{\ell_2}$ the scalar-valued fiber $\pi^{-1}(z) \subset \M_u(\B)$ consists only of the evaluation homomorphism $\delta_z$.
\end{lemma}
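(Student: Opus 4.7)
The plan is to exhibit a quantitative peak function for $z$ inside $\A_u(\B)$ and then to force any $\varphi\in\pi^{-1}(z)$ to agree with $\delta_z$ by a multiplicative trick. Throughout, I use the automatic continuity of characters on the unital commutative Banach algebra $\A_u(\B)$ (so $\|\varphi\|=1$).

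First I would consider $g(x)=\tfrac{1}{2}(1+\langle x,z\rangle)$, which is affine (hence holomorphic) and uniformly continuous on $\B$, with $g(z)=1$ and $\|g\|_\infty\le 1$ by Cauchy--Schwarz. Writing $\langle\cdot,z\rangle=\sum_{n}\overline{z_n}\langle\cdot,e_n\rangle$ as a norm-convergent series in $\A_u(\B)$ (which works because $(e_n^*)$ is a Schauder basis of $\ell_2^*=\ell_2$), the hypothesis $\pi(\varphi)=z$, namely $\varphi(\langle\cdot,e_n\rangle)=z_n$, together with the linearity and continuity of $\varphi$, yields
\[
\varphi(g)=\tfrac{1}{2}\bigl(1+\|z\|^2\bigr)=1.
\]
Next I would prove the quantitative peak estimate
\[
|g(x)|^{2}\le 1-\tfrac{1}{4}\|x-z\|^{2}\quad\text{for every } x\in\overline{\B}.
\]
Expanding $|g(x)|^2=\tfrac{1}{4}(|\langle x,z\rangle|^2+2\,\mathrm{Re}\langle x,z\rangle+1)$, substituting the polarization identity $\|x-z\|^2=\|x\|^2+1-2\,\mathrm{Re}\langle x,z\rangle$, and bounding $|\langle x,z\rangle|^2\le\|x\|^2$ by Cauchy--Schwarz gives the estimate after a short computation.

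Given $f\in\A_u(\B)$, I would then set $F=f-f(z)\mathbf{1}\in\A_u(\B)$, so that $F(z)=0$. For each $n$, $Fg^n\in\A_u(\B)$ and multiplicativity provides $\varphi(Fg^n)=\varphi(F)\varphi(g)^n=\varphi(F)$, hence $|\varphi(F)|\le\|Fg^n\|_\infty$ for every $n$. Thus it suffices to prove $\|Fg^n\|_\infty\to 0$: given $\eps>0$, the continuous extension of $F$ to $\overline{\B}$ supplies a $\delta>0$ with $|F(x)|<\eps$ whenever $\|x-z\|<\delta$, while on the complement the peak estimate forces $|g(x)|^n\le(1-\delta^2/4)^{n/2}\to 0$. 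Combining the two bounds yields $\|Fg^n\|_\infty\to 0$, so $\varphi(F)=0$ and $\varphi(f)=f(z)$, i.e.\ $\varphi=\delta_z$.

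The argument has no genuine obstacle and is essentially Farmer's original proof. The only delicate point is the identity $\varphi(\langle\cdot,z\rangle)=\|z\|^2$; it rests on both the automatic continuity of the character and the fact that $(e_n^*)$ is a Schauder basis of $\ell_2^*=\ell_2$, so that the functional $\langle\cdot,z\rangle$ is a norm limit of its partial sums inside $\A_u(\B)$.
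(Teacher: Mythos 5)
Your proof is correct and complete; the peak-function argument you give (constructing the strong peak function $g(x)=\tfrac12(1+\langle x,z\rangle)$, computing $\varphi(g)=1$ via the norm-convergent expansion $\langle\cdot,z\rangle=\sum_n\overline{z_n}\langle\cdot,e_n\rangle$, proving the quantitative estimate $|g(x)|^2\le 1-\tfrac14\|x-z\|^2$, and then killing $F=f-f(z)\mathbf{1}$ by multiplying by $g^n$) is exactly the classical strong-peak-point mechanism that Farmer exploits in the uniformly convex setting. The paper itself does not reprove this lemma but simply cites Farmer, so there is no ``paper's proof'' to diverge from; your write-up supplies the argument the citation points to, and all the steps check out, including the delicate point you flag about continuity of $\varphi$ together with the Schauder basis of $\ell_2^*$ being what legitimizes $\varphi(\langle\cdot,z\rangle)=\|z\|^2$.
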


Now we have all the necessary ingredients to prove our main result regarding \textit{edge fibers}.

\begin{proposition}
     Let $g \in S_{\mathcal {H}^\infty(\B,\ell_2)}$ be a constant function. Then the fiber $\mathscr{F}(g)$ consists only of the composition homomorphism $C_g$.
\end{proposition}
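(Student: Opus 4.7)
The plan is to combine Remark \ref{rmkCg} with Lemma \ref{deltaAu} via the observation that evaluations of a vector-valued homomorphism at points $x\in\B$ produce scalar-valued homomorphisms lying in a specific fiber of $\M_u(\B)$.

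First I would fix $\Phi \in \mathscr{F}(g)$, and write $g \equiv w$ with $w \in S_{\ell_2}$ (this uses the hypothesis that $g$ is constant with $|g(x)| = 1$; recall from the preceding discussion in the excerpt that any $g$ with $\|g\|=1$ attaining norm $1$ at some interior point must be constant by strict convexity, though here constancy is assumed). For each fixed $x \in \B$, consider the mapping $\delta_x \circ \Phi : \A_u(\B) \to \C$ defined by $f \mapsto \Phi(f)(x)$. This is a composition of algebra homomorphisms, hence a (possibly zero) algebra homomorphism into $\C$.

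Next I would compute $\pi(\delta_x \circ \Phi)$ using the formula \eqref{pi}: its $n$-th coordinate is
\begin{equation*}
(\delta_x \circ \Phi)(\langle \cdot, e_n \rangle) = \Phi(\langle \cdot, e_n\rangle)(x) = \xi(\Phi)(x)_n = g(x)_n = w_n.
\end{equation*}
Thus $\pi(\delta_x \circ \Phi) = w$. In particular, $\delta_x \circ \Phi$ is nonzero (since the constant function $1$ maps to $1$, or equivalently since $\|w\|=1$), so it genuinely belongs to $\M_u(\B)$ and lies in the scalar-valued fiber $\pi^{-1}(w)$.

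Since $w \in S_{\ell_2}$, Lemma \ref{deltaAu} gives $\pi^{-1}(w) = \{\delta_w\}$, whence $\delta_x \circ \Phi = \delta_w = \delta_{g(x)}$. As $x \in \B$ was arbitrary, Remark \ref{rmkCg} now yields $\Phi = C_g$, finishing the proof. The argument is essentially immediate once the two ingredients are lined up; there is no real obstacle, only the verification that $\delta_x \circ \Phi$ is nonzero, which is automatic from $\pi(\delta_x \circ \Phi) = w \neq 0$.
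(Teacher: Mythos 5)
Your proof is correct and follows essentially the same route as the paper: consider $\delta_x\circ\Phi$, compute its projection $\pi$, invoke Lemma \ref{deltaAu} to identify it with $\delta_{g(x)}$, and conclude via Remark \ref{rmkCg}. The only addition beyond the paper's argument is the explicit check that $\delta_x\circ\Phi$ is nonzero, which is a reasonable point to spell out.
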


\begin{proof}
 Let  $\Psi \in \M_{u,\infty}(\B,\B)$ such that $\xi(\Psi) = g$. For all $x \in \B$ we consider the mapping $\delta_x \circ \Psi \in \M_u(\B)$. This mapping verifies
 \begin{align*}
     \pi(\delta_x \circ \Psi) &= (\delta_x \circ \Psi ( \langle \cdot , e_n \rangle ))_n\\
     &= (\Psi ( \langle \cdot , e_n \rangle ))_n (x) = \xi(\Phi)(x) = g(x).
 \end{align*}
By Lemma \ref{deltaAu}, the previous equality implies that $\delta_x \circ \Psi = \delta_{g(x)}$ for every $x \in \B$. Recalling Remark~\ref{rmkCg} we conclude that $\Psi = C_g$.
\end{proof}

\subsection{Middle Fibers}

Middle fibers are somewhat a transition case as we can find examples for both singleton middle fibers and middle fibers containing an analytic copy of a ball. To give some context to our results we present some guiding examples.

Let $Id:\B \to \B$ be the identity function. Clearly the fiber over $Id$ is a middle fiber. However, the identity is singular in many ways. One of them being that $Id$ is continuously extendable to $\overline{B}_{\ell_2}$ and the extension verifies $Id(S_{\ell_2}) \subset S_{\ell_2}$. Recall that Lemma $\ref{deltaAu}$ shows that every $z \in S_{\ell_2}$ has a singleton fiber for the scalar-valued spectrum, so the condition $Id(S_{\ell_2}) \subset S_{\ell_2}$ turns out to be quite restrictive. We will see that the only homomorphism in $\M_{u,\infty}(\B,\B)$ lying in the fiber over $Id$ is the corresponding composition mapping (which of course is the identity mapping).

Before we go into further detail regarding the fiber over $Id$ we need to introduce some additional notation.

We denote with $\M_\infty(\B)$ the scalar-valued spectrum 
\begin{align*}
    \M_\infty(\B) = \{ \varphi: \mathcal {H}^\infty(\B) \to \C \text{ continuous algebra homomorphism}\} \setminus \{0\}.
\end{align*}
As it happens to $\M_u(\B)$, this spectrum also has an associated projection
 \begin{align}
    \pi: \M_\infty(\B) &\to \ell_2 \nonumber\\
    \pi(\varphi) &= \left(\varphi(\langle \cdot , e_n \rangle)\right)_n.
    \label{pi_infty}
\end{align}
To prevent confusion, we will write $\pi_\infty$ to denote the projection defined in \eqref{pi_infty} and $\pi_u$ to denote the projection defined in \eqref{pi}, whenever both are involved in the same computation.

Also, we recall the following result (which was used to derive Lemma \ref{deltaAu}).

\begin{lemma}{\cite[Lem. 4.4]{Farmer}}
\label{farmer}
Let $B$ be the ball of a uniformly convex Banach space. Then $f \in \mathcal {H}^\infty(B)$ is continuously extendable to a point $x$ of the unit sphere if and only if $f$ is constant on the fiber $\pi_\infty^{-1}(x)$.
\end{lemma}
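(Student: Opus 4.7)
My plan is to split the biconditional into its two implications, which have rather different flavors; both go through nets of point evaluations, but only one requires the uniform convexity hypothesis in an essential way.

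For the direction ``$f$ constant on $\pi_\infty^{-1}(x)$ $\Rightarrow$ $f$ extends continuously at $x$'', I would pick any net $y_\alpha\in B$ with $y_\alpha\to x$ in norm and exploit the weak-star compactness of $\M_\infty(B)$. Passing to a subnet so that $\delta_{y_\alpha}\to\varphi$ in the Gelfand topology, I check that $\pi_\infty(\varphi)=x$ by testing against each $x^*\in X^*$: $\varphi(x^*)=\lim x^*(y_\alpha)=x^*(x)$. Thus $\varphi\in\pi_\infty^{-1}(x)$, so by hypothesis $\varphi(f)=c$ and consequently $\lim f(y_\alpha)=c$. The standard ``every subnet has a further subnet converging to $c$'' argument promotes this to convergence of the full net, producing the continuous extension with value $c$.

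For the converse, assume $f$ extends continuously at $x$; after subtracting the boundary value I may suppose $\tilde f(x)=0$ and must show $\varphi(f)=0$ for every $\varphi\in\pi_\infty^{-1}(x)$. The crucial ingredient is a sequence of multiplicative \emph{peak functions} at $x$. By Hahn-Banach, fix a norming functional $x^*\in S_{X^*}$ with $x^*(x)=1$ and set
\[
L_n(y)=\left(\frac{1+x^*(y)}{2}\right)^n \in \mathcal{H}^\infty(B),
\]
which satisfies $\|L_n\|_\infty\le 1$ and, since $\varphi|_{X^*}$ is evaluation at $x$, $\varphi(L_n)=1$ by multiplicativity. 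Therefore $\varphi(f)=\varphi(fL_n)$ and $|\varphi(f)|\le\|fL_n\|_\infty$. Given $\varepsilon>0$, I choose $\delta>0$ with $|f|<\varepsilon$ on $U:=\{y\in B:\|y-x\|<\delta\}$ and split the sup norm into the contribution from $U$, where the factor $f$ is small, and its complement, where I need $L_n$ to be small.

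The main obstacle is obtaining the uniform decay of $L_n$ off $U$, and this is precisely where uniform convexity of $X$ is used. Uniform convexity furnishes some $\eta=\eta(\delta)>0$ with $\|(y+x)/2\|\le 1-\eta$ whenever $y\in\overline B$ and $\|y-x\|\ge\delta$; applying the norm-one functional $x^*$ to $(y+x)/2$ then gives $|1+x^*(y)|/2\le 1-\eta$, so $|L_n|\le(1-\eta)^n$ off $U$. Choosing $n$ so large that $\|f\|_\infty(1-\eta)^n<\varepsilon$ yields $\|fL_n\|_\infty\le\varepsilon$ and hence $|\varphi(f)|\le\varepsilon$; letting $\varepsilon\to 0$ finishes. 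Without uniform convexity this quantitative smoothing of the boundary fails, the peak functions would not decay away from $x$, and the localization argument collapses; every other step is just weak-star compactness together with the multiplicativity of $\varphi$.
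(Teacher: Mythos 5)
Your argument is correct. Note that the paper does not prove this lemma; it is quoted verbatim from Farmer's article (Lemma 4.4), so there is no in-text proof to compare against. That said, your approach is the standard one in this circle of ideas and almost certainly matches Farmer's.

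For the forward direction (constant on the fiber implies extendable), your net-plus-subnet argument through the weak-star compactness of $\M_\infty(B)$ is exactly right, and also implicitly establishes that $\pi_\infty^{-1}(x)$ is nonempty (so the constant $c$ is well-defined); you may want to say that explicitly, since reflexivity of a uniformly convex space is what lets you write $\pi_\infty^{-1}(x)$ for $x\in S_X$ rather than an element of $S_{X^{**}}$. For the converse, the peak functions $L_n(y)=\bigl(\tfrac{1+x^*(y)}{2}\bigr)^n$ with a norming $x^*$ are the classical device (appearing in Farmer and in Cole--Gamelin--Johnson), and you correctly isolate where uniform convexity is load-bearing: it delivers the uniform decay $|L_n|\le(1-\eta)^n$ on $\{y\in B:\|y-x\|\ge\delta\}$, because $\tfrac{1+x^*(y)}{2}=x^*\!\bigl(\tfrac{y+x}{2}\bigr)$ and the midpoint has norm at most $1-\eta(\delta)$. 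The multiplicativity step $\varphi(L_n)=\varphi(M)^n=1$ (with $M=\tfrac12+\tfrac12 x^*$) and the estimate $|\varphi(f)|\le\|fL_n\|_\infty\le\varepsilon$ then close the argument. This is correct and complete.
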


We can now focus on the previously mentioned example.

\begin{example}
 Let $\Phi \in \M_{u,\infty}(\B,\B)$ such that $\xi(\Phi) = Id$ and let $C^*_\Phi$ denote the composition mapping
 \begin{align*}
     C^*_\Phi:\M_\infty(\B) &\to \M_u(\B)\\
     C^*_\Phi(\varphi) &= \varphi \circ \Phi.
 \end{align*}
 Since for every $n \in \N$ we have $C^*_\Phi(\varphi) (\langle \cdot , e_n \rangle) = \varphi(\Phi(\langle \cdot , e_n \rangle)) = \varphi (\langle \cdot , e_n \rangle)$ it follows that $\pi_u(C^*_\Phi(\varphi)) = \pi_\infty(\varphi)$. As a consequence, for every $z \in S_{\ell_2}$ and $\varphi \in \pi_\infty^{-1}(z)$ the composition homomorphism $C^*_\Phi$ maps $\varphi$ to the only homomorphism in $\pi_u^{-1}(z)$, namely $\delta_z$. 
Now for every $f \in \A_u(\B)$ we have that $\Phi(f) \in \mathcal {H}^\infty(\B)$ is constant on the fiber over any $z \in S_{\ell_2}$ (taking the value $f(z)$). By Lemma \ref{farmer}, $\Phi(f)$ has a continuous extension to $S_{\ell_2}$ where it coincides with $f$. By the maximum modulus principle it follows that $\Phi(f) = f$, showing that the identity function has a singleton fiber.
\end{example}

Now, isolating some of the core elements of the previous example we obtain the following  statement.

\begin{lemma} \label{Singleton middle fiber}
Let $g \in \mathcal {H}^\infty(\B,\ell_2)$, continuously extendable to $S_{\ell_2}$ such that $g(S_{\ell_2}) \subset S_{\ell_2}$. Then the fiber $\mathscr{F}(g)$ coincides with $\{\Phi_g\}.$
\end{lemma}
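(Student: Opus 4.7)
The plan is to generalize the argument from the Example about the identity function. Fix any $\Phi\in\mathscr F(g)$; we want to prove $\Phi=C_g$. Following the Example, introduce the associated composition
$$C^*_\Phi:\M_\infty(\B)\to\M_u(\B),\qquad C^*_\Phi(\varphi)=\varphi\circ\Phi,$$
and, for each fixed $z\in S_{\ell_2}$, apply it to an arbitrary $\varphi\in\pi_\infty^{-1}(z)$. The calculation that drove the Example yields
$$\pi_u\bigl(C^*_\Phi(\varphi)\bigr)=\bigl(\varphi(\Phi(\langle\cdot,e_n\rangle))\bigr)_n=\bigl(\varphi(g_n)\bigr)_n,$$
where $g_n=\langle g(\cdot),e_n\rangle\in\mathcal H^\infty(\B)$. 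Since every $g_n$ is continuously extendable to $z$ by hypothesis, Lemma~\ref{farmer} tells us that each $g_n$ is constant on $\pi_\infty^{-1}(z)$, taking the extended value $g_n(z)$. Hence $\pi_u(C^*_\Phi(\varphi))=g(z)$, and by hypothesis $g(z)\in S_{\ell_2}$.

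Now I invoke Lemma~\ref{deltaAu}: the scalar-valued fiber $\pi_u^{-1}(g(z))$ is the singleton $\{\delta_{g(z)}\}$. Therefore $C^*_\Phi(\varphi)=\delta_{g(z)}$, i.e.\ $\varphi(\Phi(f))=f(g(z))$ for every $f\in\A_u(\B)$ and every $\varphi\in\pi_\infty^{-1}(z)$. In particular, for each fixed $f\in\A_u(\B)$ the bounded holomorphic function $\Phi(f)\in\mathcal H^\infty(\B)$ is constant on the fiber $\pi_\infty^{-1}(z)$ with constant value $f(g(z))$. Applying the other direction of Lemma~\ref{farmer}, $\Phi(f)$ is continuously extendable to $z$, and the extended value at $z$ equals $f(g(z))$, which is exactly the continuous extension of $C_g(f)=f\circ g$ to $z$. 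Since $z\in S_{\ell_2}$ was arbitrary, $\Phi(f)$ and $C_g(f)$ agree on all of $S_{\ell_2}$.

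To finish I would run the maximum modulus argument used at the end of the Example: the difference $\Phi(f)-C_g(f)$ is bounded and holomorphic on $\B$, extends continuously to $\overline{B}_{\ell_2}$, and vanishes on $S_{\ell_2}$; restricting to an arbitrary finite-dimensional subspace $E\subset\ell_2$ one has a bounded holomorphic function on $E\cap\B$, continuous on its closure and vanishing on its boundary $E\cap S_{\ell_2}$, so it vanishes on $E\cap\B$ by the classical maximum principle, and since $\bigcup_E (E\cap\B)$ is dense in $\B$ we conclude $\Phi(f)=C_g(f)$ on $\B$. As $f$ was arbitrary, $\Phi=C_g$, showing $\mathscr F(g)=\{C_g\}$.

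The only point that requires any care is verifying that the $g_n$ genuinely extend continuously to each $z\in S_{\ell_2}$: this follows because $g:\B\to \overline{B}_{\ell_2}$ extends continuously to $\overline{B}_{\ell_2}$ by assumption and $\langle\cdot,e_n\rangle$ is continuous on $\overline{B}_{\ell_2}$, so each $g_n=\langle g(\cdot),e_n\rangle$ inherits the continuous extension. Everything else is a direct transcription of the Example, where the identity hypothesis $g=\mathrm{Id}$ is replaced by the structurally identical hypothesis that $g$ extends continuously and sends $S_{\ell_2}$ into $S_{\ell_2}$.
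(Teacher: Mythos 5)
Your proof is correct and follows essentially the same route as the paper's: both apply $C^*_\Phi$ to elements of $\pi_\infty^{-1}(z)$ for $z\in S_{\ell_2}$, use Farmer's lemma to identify $\pi_u(C^*_\Phi(\varphi))=g(z)$, invoke the singleton-fiber result for $\A_u(\B)$ to get $C^*_\Phi(\varphi)=\delta_{g(z)}$, apply Farmer's lemma again to extend $\Phi(f)$ continuously to $S_{\ell_2}$ where it agrees with $f\circ g$, and finish by the maximum modulus principle. You merely make a few implicit steps explicit (citing Lemma~\ref{deltaAu} by name and spelling out the maximum-modulus argument via finite-dimensional slices), but the structure is the same.
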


\begin{proof} 
 Let $z \in S_{\ell_2}$ and take $\varphi \in \pi_\infty^{-1}(z)$. For $\Phi \in \mathscr{F}(g)$ we have that
 \begin{align*}
    C_\Phi^*(\varphi)(\langle \cdot, e_n \rangle) = \varphi(\Phi(\langle \cdot, e_n \rangle)) = \varphi(\langle g, e_n\rangle).  
 \end{align*}
 Since $\varphi$ lies in the fiber over $z \in S_{\ell_2}$ and $g$ is continuously extendable to the unit sphere, by Lemma \ref{farmer} we conclude that  $\varphi(\langle g, e_n\rangle) = \langle g , e_n \rangle (z)$. It follows that $\pi_u(C_\Phi^*(\varphi)) = g(z)\in S_{\ell_2}$. We can now compute for $f \in \A_u(\B)$ 
 \begin{align*}
     \varphi(\Phi(f)) = C_\Phi^*(\varphi)(f) = \delta_{g(z)}(f).
 \end{align*}
 As this equality holds for any $\varphi$ in the fiber over $z$ we obtain that $\Phi(f)$ is constant in $\pi_\infty^{-1}(z)$ so by Lemma \ref{farmer} it is continuously extendable to $z$ where it coincides with $f \circ g(z)$. This in turn implies that $\Phi(f)$ is in $\mathcal {H}^\infty(\B)$ and is continuously extendable to $S_{\ell_2}$ where it coincides with $f \circ g$ so we must have that $\Phi(f) = C_g(f)$.
 \end{proof}
 
 Through the previous result we see that any function $g \in \mathcal {H}^\infty(\B,\ell_2)$ with $\|g\|= 1$ and $g(\B) \subset \B$ that can be continuously extendable to $S_{\ell_2}$ and such that $g(S_{\ell_2}) \subset S_{\ell_2}$ (as we mentioned, $Id$ is an example of this) has a singleton middle fiber. Further, any linear isometry $T:\ell_2 \to \ell_2$ (not necessarily onto) when restricted to the unit ball satisfies the above conditions (e.g. the shift operator $S(x_1,x_2,x_3,\ldots) = (0,x_1,x_2,x_3,\ldots)$) and thus has a singleton fiber.
 
 A, perhaps more general, restatement of the previous lemma can be obtained by noting that the condition $g(S_{\ell_2}) \subset S_{\ell_2}$ is used to ensure that whenever $\Phi$ lies in the fiber over $g$, the corresponding mapping $C_\Phi^*$ maps any $\varphi$ in $\pi_\infty^{-1}(z)$ to the same homomorphism in $\M_u(\B)$ whenever $z \in S_{\ell_2}$.

\begin{lemma}
\label{general}
Let $\Phi \in \M_{u,\infty}(\B,\B)$ such that there exists a continuous function $h: S_{\ell_2} \to \overline{B}_{\ell_2}$ satisfying $C_{\Phi}^*(\pi_{\infty}^{-1}(z)) = \{\delta_{h(z)}\}$ for all $z \in S_{\ell_2}$. Then $\Phi$ coincides with the homomorphism $C_{\xi(\Phi)}$.
\end{lemma}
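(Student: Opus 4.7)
The plan is to follow the template of the proof of Lemma~\ref{Singleton middle fiber}, using the direct hypothesis $C_\Phi^*(\pi_\infty^{-1}(z))=\{\delta_{h(z)}\}$ in place of the implicit identity $g|_{S_{\ell_2}}=h$ (with $h(z)\in S_{\ell_2}$) that was exploited there. Writing $g=\xi(\Phi)$ and fixing $f\in \A_u(\B)$, the first step is to note that for every $z\in S_{\ell_2}$ and every $\varphi\in\pi_\infty^{-1}(z)$ the hypothesis gives
\[
\varphi(\Phi(f))=C_\Phi^*(\varphi)(f)=\delta_{h(z)}(f)=f(h(z)),
\]
a value independent of $\varphi$. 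Hence $\Phi(f)\in\mathcal{H}^\infty(\B)$ is constant on $\pi_\infty^{-1}(z)$ and, by Farmer's Lemma~\ref{farmer}, extends continuously at $z$ with value $f(h(z))$. Since $h$ is continuous and $f\in \A_u(\B)$ extends continuously to $\overline{\B}$, the function $f\circ h$ is continuous on $S_{\ell_2}$, so $\Phi(f)$ admits a continuous extension to $\overline{\B}$ that agrees with $f\circ h$ on $S_{\ell_2}$.

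Next I would run the same Farmer-type argument on $C_g(f)=f\circ g$. Specializing the previous step to the coordinate functionals $f=\langle\cdot,e_n\rangle$ shows that each $g_n=\Phi(\langle\cdot,e_n\rangle)$ extends continuously at every $z\in S_{\ell_2}$ with boundary value $h_n(z)$. Consequently, for $\varphi\in\pi_\infty^{-1}(z)$ the composition $\psi:=\varphi\circ C_g\in\M_u(\B)$ satisfies $\pi_u(\psi)=(\varphi(g_n))_n=h(z)$. Mirroring Lemma~\ref{Singleton middle fiber} — where $h(z)\in S_{\ell_2}$ and Lemma~\ref{deltaAu} forces $\psi=\delta_{h(z)}$ — one identifies $\psi$ with $\delta_{h(z)}$ here as well, so $f\circ g$ is constant on each $\pi_\infty^{-1}(z)$ and, again by Farmer, extends continuously to $\overline{\B}$ with boundary $f\circ h$.

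Finally, the difference $F:=\Phi(f)-C_g(f)\in\mathcal{H}^\infty(\B)$ is holomorphic on $\B$, continuous on $\overline{\B}$, and vanishes on $S_{\ell_2}$. Restricting $F$ to $V\cap\overline{\B}$ for any finite-dimensional complex subspace $V\subset\ell_2$, the classical maximum modulus principle on the compact set $V\cap\overline{\B}$ forces $F\equiv 0$ on $V\cap\B$; since $\B$ is the union of such finite-dimensional slices, $F\equiv 0$ on $\B$. Thus $\Phi(f)=C_g(f)$ for every $f\in\A_u(\B)$, proving $\Phi=C_{\xi(\Phi)}$.

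The hardest step, I expect, is the middle one: identifying $\psi=\varphi\circ C_g$ with $\delta_{h(z)}$ when $h(z)$ lies strictly inside $\B$ rather than on $S_{\ell_2}$. In that case the fiber $\pi_u^{-1}(h(z))$ is no longer a singleton, so Lemma~\ref{deltaAu} does not apply, and one must exploit the fact that $\varphi$ acts as evaluation at $h(z)$ on each $g_n$ (hence on every polynomial built from them), combined with a density or continuous-extension argument in $\A_u(\B)$ that transfers this coincidence to all $f\in\A_u(\B)$.
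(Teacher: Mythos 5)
The paper does not actually supply a proof of Lemma~\ref{general}; it is stated, without argument, as a ``perhaps more general restatement'' of Lemma~\ref{Singleton middle fiber}, and the authors immediately admit they do not know whether its hypothesis is genuinely weaker. So there is no paper proof to compare yours against. What can be said is that your proof reproduces the only plausible reading of the paper's intent — run the Farmer-plus-maximum-modulus argument of Lemma~\ref{Singleton middle fiber} with the hypothesis $C_\Phi^*(\pi_\infty^{-1}(z))=\{\delta_{h(z)}\}$ supplied directly rather than derived — and you have correctly located the one step where that adaptation is not automatic.

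The gap you flag is real and, as written, not closed. To deduce $\Phi(f)=C_g(f)$ via maximum modulus on slices you need \emph{both} $\Phi(f)$ and $C_g(f)=f\circ g$ to extend continuously to each $z\in S_{\ell_2}$ with the same boundary value $f(h(z))$. The hypothesis gives this for $\Phi(f)$. For $C_g(f)$, Farmer's Lemma~\ref{farmer} reduces it to showing $\varphi(f\circ g)=f(h(z))$ for every $\varphi\in\pi_\infty^{-1}(z)$, i.e.\ $\varphi\circ C_g=\delta_{h(z)}$; you only know $\pi_u(\varphi\circ C_g)=h(z)$, and when $h(z)\in B_{\ell_2}$ the fiber $\pi_u^{-1}(h(z))$ is huge (indeed contains a copy of $B_{\ell_\infty}$), so this does not pin down $\varphi\circ C_g$. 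Your suggested remedy — ``a density or continuous-extension argument'' transferring agreement on polynomials built from the $g_n$ to all of $\A_u(\B)$ — cannot work in general: by \cite[Lem.~6.2]{aron1995weak} (quoted in Section~3) the functions in $\A_u(\B)$ that are uniform limits of finite type polynomials are precisely the weakly continuous ones, a proper subalgebra, and it is exactly the non-weakly-continuous $f$ that are of interest here. Conceptually, the hypothesis $g(S_{\ell_2})\subset S_{\ell_2}$ in Lemma~\ref{Singleton middle fiber} is used twice: once to make $C_\Phi^*(\pi_\infty^{-1}(z))$ a singleton evaluation (which Lemma~\ref{general} now assumes), and once, implicitly, to guarantee that $f\circ g$ itself extends continuously with boundary $f\circ h$. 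Lemma~\ref{general}'s hypothesis only replaces the first use. To complete the proof one must either show the hypothesis forces $h(S_{\ell_2})\subset S_{\ell_2}$ (so Lemma~\ref{deltaAu} applies), or independently establish that $g=\xi(\Phi)$ extends continuously to $\overline{\B}$ with boundary $h$; neither is immediate, and your writeup should flag this as an open point rather than ``expect'' it to go through.
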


We do not know, however, if there are homomorphisms $\Phi$ in $\M_{u,\infty}(\B,\B)$ satisfying the conditions of Lemma \ref{general} which are not in fibers of  functions fulfilling the conditions of Lemma \ref{Singleton middle fiber}.


So far we have shown middle fibers consisting only of the corresponding composition homomorphism. We can say that they behave like edge fibers. As we have anticipated, there are other middle fibers containing a ball (so, acting as interior fibers). Again we begin with a prototype example.

\begin{example}
\label{ge1}
Let $g \in \mathcal {H}^\infty(\B,\ell_2)$, given by $g(x) = x_1 e_1$. Then $g$ satisfies $\|g\| = 1$, $g(\B) \subset \B$ and there exists an analytic injection $\Psi_g:B_{\mathcal {H}^\infty(\B,\ell_2)} \to \mathscr{F}(g)$.

For every $h \in B_{\mathcal {H}^\infty(\B,\ell_2)}$, $k \in \N$ let $s_{h,k} \in \mathcal {H}^\infty(\B,\ell_2)$ be defined by
\begin{align*}
    s_{h,k}(x) = x_1e_1 + x_2\left(\sum_{j=1}^\infty\langle h(x),e_j\rangle  e_{k+j}\right), \quad \textrm{for all }x\in \B.
\end{align*}
Note that 
\begin{align}
\label{ineqge1}
    \left\|s_{h,k}(x)\right\|^2 &= |x_1|^2 + |x_2|^2\|h(x)\|^2 \leq |x_1|^2 + |x_2|^2 < 1.
\end{align}
Thus we see that $s_{h,k}(\B) \subset \B$. We now define for each $k \in \N$ the composition homomorphism $\Psi_k: B_{\mathcal {H}^\infty(\B,\ell_2)} \to \M_{u,\infty}(\B,\B)$ as $\Psi_k(h)(f) = C_{s_{h,k}}(f) = f \circ s_{h,k}$.

Take $\Psi_g$ to be a weak-star limit along a ultrafilter of the sequence $(\Psi_k)_k$. Now, proceeding as in Theorem \ref{inyVectorial} we can see that $\Psi_g$ is an analytic injection whose image is contained in $\mathscr{F}(g)$.



\end{example}

The arguments in the previous example can be slightly modified to reach more cases as we see in the following theorem. The hypothesis go in line with the idea that we need conditions that ensure we have some `room' to work with to obtain an analytic injection. This is done by ensuring  (perhaps after a change of coordinates) that $g$ does not depend on at least one variable. Additionally, as we want to extend these ideas  beyond functions $g$ with $g(0) = 0$, we include the condition $\|g(0)\| + \|g - g(0)\|\le 1$ and make use of the tools from Example $\ref{ge1}$ for $g - g(0)$.

\begin{theorem}
\label{middleTheorem}
Let $g \in S_{\mathcal {H}^\infty(\B,\ell_2)}$ with $g(\B) \subset \B$. If $g$ satisfies:
\begin{enumerate}
    \item $\|g(0)\| + \|g - g(0)\| \leq 1.$
    \item There is a linear transformation $P: \ell_2 \to \ell_2$ with $\|P\| \leq 1$ and non-trivial kernel such that $g = g \circ P$.
\end{enumerate}
Then there exists an analytic injection $\Psi_g:B_{\mathcal {H}^\infty(\B,\ell_2)} \to \mathscr{F}(g).$
\end{theorem}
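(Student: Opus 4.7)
The plan is to mimic the perturbation construction of Example~\ref{ge1}, using hypothesis~(2) to single out a ``free'' unit direction $e$ along which $g$ is invariant, and hypothesis~(1) together with Schwarz's lemma to control $\|g(x)\|$ tightly on the hyperplane orthogonal to $e$.

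First, pick a unit vector $e\in\ker P$ and let $Q=I-\langle\cdot,e\rangle e$ be the orthogonal projection of $\ell_2$ onto $\{e\}^\perp$. Since $Pe=0$, $PQ=P$, and consequently $g(x)=g(Px)=g(P(Qx))=g(Qx)$; hence $g=g\circ Q$, and the same identity holds for $\tilde g:=g-g(0)$. Let $a=\|g(0)\|$; then $a<1$, for otherwise hypothesis~(1) forces $g$ to be the constant $g(0)\in S_{\ell_2}$, contradicting $g(\B)\subset\B$. Hypothesis~(1) gives $\|\tilde g\|\le 1-a$; applying Schwarz's lemma to $\tilde g/(1-a)\colon\B\to\overline B_{\ell_2}$ (which vanishes at $0$) and then using $\tilde g=\tilde g\circ Q$ yields $\|\tilde g(x)\|\le(1-a)\|Qx\|$, so $\|g(x)\|\le a+(1-a)\|Qx\|$ for every $x\in\B$.

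Following the template of Example~\ref{ge1}, for each $k\in\N$ let $P_k$ be the orthogonal projection of $\ell_2$ onto $\mathrm{span}\{e_1,\dots,e_k\}$ and let $S_k$ be the $k$-shift on $\ell_2$. For $h\in B_{\mathcal{H}^\infty(\B,\ell_2)}$ and $x\in\B$, set
\[
s_{h,k}(x)=P_k g(x)+(1-a)\langle x,e\rangle\,S_k\bigl(h(x)\bigr).
\]
The two summands live in orthogonal coordinate subspaces, so $\|s_{h,k}(x)\|^2=\|P_k g(x)\|^2+(1-a)^2|\langle x,e\rangle|^2\|h(x)\|^2$; inserting $\|P_k g(x)\|\le\|g(x)\|\le a+(1-a)\|Qx\|$, $\|h(x)\|\le 1$, and $\|Qx\|^2+|\langle x,e\rangle|^2=\|x\|^2<1$ gives $\|s_{h,k}(x)\|^2\le(a+(1-a)\|x\|)^2<1$ after routine algebra. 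Thus $s_{h,k}(\B)\subset\B$ and $C_{s_{h,k}}\in\M_{u,\infty}(\B,\B)$. Fixing a free ultrafilter $\mathscr U$ on $\N$ containing every tail $\{k\ge k_0\}$, I define $\Psi_g(h)$ to be the weak-$*$ limit along $\mathscr U$ of $(C_{s_{h,k}})_k$, which exists in $\M_{u,\infty}(\B,\B)$ by weak-$*$ compactness.

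The identity $\xi(\Psi_g(h))=g$ is immediate: for every $n$ and every $x$, once $k\ge n$ one has $\langle s_{h,k}(x),e_n\rangle=\langle g(x),e_n\rangle$. Analyticity of $h\mapsto\Psi_g(h)(f)(x)$ is proved exactly as in Theorem~\ref{inyVectorial}, since $s_{h,k}(x)$ is affine in $h$ and the bounded ultrafilter limit preserves analyticity. For injectivity I adapt the test-function argument of Lemma~\ref{iny-escalar}: a direct computation with $f_\lambda(x)=\sum_j\lambda^j x_j^2$ gives
\[
\Psi_g(h)(f_\lambda)(x)=\sum_{j=1}^\infty\lambda^j\langle g(x),e_j\rangle^2+\chi(\lambda)(1-a)^2\langle x,e\rangle^2\sum_{j=1}^\infty\lambda^j\langle h(x),e_j\rangle^2,
\]
where $\chi(\lambda)=\lim_{\mathscr U}\lambda^k$; if $\Psi_g(h_1)=\Psi_g(h_2)$, then evaluating at any $x$ with $\langle x,e\rangle\ne 0$ and varying $\lambda\in\partial\D$ (where $|\chi(\lambda)|=1$) recovers $\langle h_1(x),e_j\rangle^2=\langle h_2(x),e_j\rangle^2$ coordinatewise via the Taylor-series uniqueness step of Lemma~\ref{iny-escalar}; repeating with $\sum_j\lambda^j x_j^3$ gives $\langle h_1(x),e_j\rangle=\langle h_2(x),e_j\rangle$, and holomorphy extends $h_1\equiv h_2$ to all of $\B$. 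The hardest step is the norm estimate $\|s_{h,k}(x)\|<1$: coordinate orthogonality from the shift is free, but securing enough ``room'' on the $\{e\}^\perp$-slice to absorb the perturbation requires both hypotheses in concert---hypothesis~(2) so that $g$ depends only on $Qx$, and hypothesis~(1) so that Schwarz's lemma sharpens $\|\tilde g(x)\|$ to $(1-a)\|Qx\|$.
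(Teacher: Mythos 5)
Your proof is correct and follows essentially the same construction as the paper's: pick a free direction $w$ (your $e$) in $\ker P$, form $s_{h,k}$ by truncating $g$ to the first $k$ coordinates and adding a shifted perturbation scaled to respect the Schwarz-lemma bound through $\ker P$, and take a weak-star ultrafilter limit. The only cosmetic differences are that you treat the general case in one pass (the paper first does $g(0)=0$, then reduces) and that you scale the perturbation by $1-\|g(0)\|$ rather than $\|g-g(0)\|$, both of which work; your spelled-out injectivity computation with $f_\lambda$ is exactly the argument the paper invokes by reference to Lemma~\ref{iny-escalar} via Theorem~\ref{inyVectorial}.
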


\begin{proof}
We first prove the result with the additional hypothesis $g(0) = 0$. Fix $w \in S_{\ell_2}\cap Ker(P)$. We define for each $k \in \N, h \in B_{\mathcal {H}^\infty(\B,\ell_2)}$ the function $s_{h,k} \in \mathcal {H}^\infty(\B,\ell_2)$ as

\begin{align*}
   s_{h,k} (x) = \sum_{j=1}^k \langle g(x), e_j \rangle e_j + \langle x , w \rangle \left(\sum_{j=1}^\infty\langle h(x),e_j\rangle  e_{k+j}\right).
\end{align*}

Note that for every $x \in \B$ we have that
\begin{equation}
    \label{ineq1}
    \|s_{h,k} (x) \|^2 \leq \sum_{j=1}^k |\langle g(x), e_j \rangle|^2 + |\langle x , w \rangle|^2
    \leq \|g(x)\|^2 +  |\langle x , w \rangle|^2.\\
\end{equation}
Now, since $g(0) = 0$ and $\|g\| \leq 1$ we can apply Schwarz Lemma to $g = g \circ P$ and obtain
\begin{align}
\label{ineq2}
\begin{split}    
   \|g(x)\|^2 +  |\langle x , w \rangle|^2 \leq \|P(x)\|^2 + |\langle x , w \rangle|^2 &= \|P( x - \langle x , w\rangle w)\|^2 + |\langle x , w \rangle|^2\\
    &\leq \|x - \langle x , w\rangle w\|^2 + |\langle x , w \rangle|^2 < 1.
    \end{split}
\end{align}
Joining both inequalities we obtain that $ s_{h,k} (\B) \subset \B$. For $k \in \N$ let $\Psi_k: B_{\mathcal {H}^\infty(\B,\ell_2)} \to \M_{u,\infty}(\B,\B)$ be given by $\Psi_k(h)(f) = C_{s_{h,k}}(f) = f \circ s_{h,k}$.
The proof of this case finishes as usual, defining $\Psi_g$ to be a weak-star limit along a ultrafilter of the sequence $(\Psi_k)_k$ and repeating the arguments used in Theorem \ref{inyVectorial}.

For the general case, let $\widetilde{g}= g - g(0)$ and set for every $k \in \N$ the function $s_{h,k} \in \mathcal {H}^\infty(\B,\ell_2)$ as

\begin{equation*}
    s_{h,k}(x) = \sum_{j=1}^k \langle  g(x), e_j \rangle e_j + \|\widetilde{g}\| \langle x , w \rangle \left(\sum_{j=1}^\infty\langle h(x),e_j\rangle  e_{k+j}\right).
\end{equation*}
 Repeating the computation done in \eqref{ineq1} and \eqref{ineq2} for 
\[ \sum_{j=1}^k \langle \widetilde g(x), e_j \rangle e_j + \|\widetilde{g}\| \langle x , w \rangle \left(\sum_{j=1}^\infty\langle h(x),e_j\rangle  e_{k+j}\right)\] yields that 
\begin{align*}
    \left\|\sum_{j=1}^k \langle \widetilde g(x), e_j \rangle e_j + \|\widetilde{g}\| \langle x , w \rangle \left(\sum_{j=1}^\infty\langle h(x),e_j\rangle  e_{k+j}\right)\right\|^2 < \|\widetilde g\|^2,
\end{align*}
for all $x \in \B$. Now the hypothesis $\|g(0)\| + \|g - g(0)\| \leq 1$ ensures that each $s_{h,k}(\B) \subset (\B)$. The argument ends by proceeding as in the first case.
\end{proof}

\begin{example}
Fix $m \in \N$, $Q:\ell_2 \to \ell_2$ an $m-$homogeneous polynomial with $\|Q\|_{\B}=1 $ and a linear projection $P:\ell_2 \to \ell_2$. Define for $z \in S_{\ell_2}$ and $0 < \alpha < 1$ the function $g: \B \to \ell_2$ by

\begin{align*}
    g(x) = \alpha ( Q \circ P) (x)  + (1 - \alpha)z.
\end{align*}
Note that $g(0) = (1 - \alpha)z$ so for every $x \in \B$ we have the inequality
\begin{align*}
    \|g(0)\| + \|g(x) - g(0)\| = (1-\alpha)\|z\| + \alpha \|Q \circ P(x)\| < 1.
\end{align*}
Further, since $P$ satisfies $P \circ P = P$, it follows that $g = g \circ P$. As a result $g$ satisfies the conditions of Theorem~\ref{middleTheorem} and thus there is an analytic copy of a ball in the fiber $\mathscr{F}(g)$.
\end{example} 

Throughout this section we have presented conditions for middle fibers to be either isolated or to contain a ball. However, as we do not know the complete description of the spectrum, it remains open if these are the only possible alternatives for the remaining fibers. For example we do not have a clue of what happen with the (middle) fiber over the function $g\in\mathcal {H}^\infty(\B,\ell_2)$ given by $g(x)=(x_n^2)_{n\in\mathbb N}$.
This function is clearly extended to $S_{\ell_2}$ but the extension do not map $S_{\ell_2}$ into $S_{\ell_2}$. Also, since $g(x)=0$ if and only if $x=0$ it can not exist a projection $P$ with non trivial kernel such that $g\circ P=g$. Thus, $g$ does not satisfy the conditions of Lemma \ref{Singleton middle fiber} nor  those of Theorem \ref{middleTheorem}. There are, of course, many functions not satisfying the conditions of the referred Lemma and Theorem but this mapping $g$ is a very natural element of $\mathcal {H}^\infty(\B, \ell_2)$ so it might be interesting to have a description of its fiber.

\section{Vector-Valued Cluster Sets}

Before focusing on the specific case of $\ell_2$ we present some general results regarding vector-valued cluster sets for $X, Y$ Banach spaces.

Let $g\in \overline{B}_{\mathcal {H}^\infty(B_Y,X^{**})}$, $f \in \A_u(B_X)$. We define the vector-valued cluster set of $f$ over $g$ as 

\begin{align*}
    \mathcal{C\ell}_{\M_{u,\infty}(B_X,B_Y)}(f,g) = \{ h \in \mathcal {H}^\infty(B_Y) \colon \exists \ (g_\alpha) \subset B_{\mathcal {H}^\infty(B_Y,X^{**})}, g_\alpha \xrightarrow{w^*}g \\\text{ satisfying } C_{g_\alpha}(f)(y) \to h(y) \ \forall y \in B_Y\}.
\end{align*}
Here $ g_\alpha \xrightarrow{w^*}g$ denotes the weak-star convergence associated to the identification
\begin{equation} \label{H infinito es dual}
    \mathcal {H}^\infty(B_Y,X^{**}) = \mathcal{L}(\mathcal{G}^\infty(B_Y),X^{**})  = (\mathcal{G}^\infty(B_Y) \widehat\otimes_\pi  X^{*})^*.
\end{equation}
In this topology, $g_\alpha \xrightarrow{w^
*}g$ whenever $g_\alpha(y)(x^*) \to g(y)(x^{*})$ for all $y \in B_Y$, $x^{*} \in X^{*}$. We direct the reader to \cite{Mujica} for the proof of the first identification and further reference.
Whenever there is no ambiguity regarding the vector-valued spectrum referenced we simply write  $\mathcal{C\ell}(f,g)$ to denote the cluster set of $f$ over $g$.

We mentioned in the introduction that the scalar-valued cluster set was first considered by I.J. Schark in \cite{Schark} for the unit disk $\D$. There it is shown that the cluster set $\mathcal{C\ell}_{\D}(f,z_0)$ is a compact subset of $\widehat{f}(\pi^{-1}(z_0))$, where $\widehat{f}(\varphi) = \varphi(f).$ The inclusion 
\begin{align}
    \label{ClusterProblem}
    \mathcal{C\ell}_{B_X}(f,z) \subset \widehat{f}(\pi^{-1}(z))
\end{align}
still holds for a general Banach space (see for instance \cite[Lem. 2.2]{AronCarandoGamelinLassalleMaestre}). Whether or not the equality in \eqref{ClusterProblem} holds is known as the \textsl{Cluster Problem} and remains open. There are, however, positive results for instance in \cite{JohnsonOrtegaCastillo} and \cite{AronCarandoGamelinLassalleMaestre} where the equality in \eqref{ClusterProblem} is proved for every $f \in \A_u(\B)$ and every $z \in \overline B_{\ell_2}$.

The vector-valued cluster set $\mathcal{C\ell}_{\M_{u,\infty}(B_X,B_Y)}(f,g)$ is analogously related to the evaluations $\Phi(f) = \widehat{f}(\Phi)$ for $\Phi \in \mathscr{F}(g)$ as we show in the following Lemma:

\begin{lemma}
\label{inclusion}
Let $g\in \overline{B}_{\mathcal {H}^\infty(B_Y,X^{**})}$, $f \in \A_u(B_X)$. Then the vector-valued cluster set $\mathcal{C\ell}(f,g)$ is a weak-star compact subset of $\widehat{f}(\mathscr{F}(g))$.
\end{lemma}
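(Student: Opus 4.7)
The plan is to prove the inclusion $\mathcal{C\ell}(f,g) \subset \widehat{f}(\mathscr{F}(g))$ and the weak-star compactness separately, each by a routine extraction of a convergent subnet followed by identification of the limit.

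For the inclusion, I would start with $h \in \mathcal{C\ell}(f,g)$ and a witnessing net $(g_\alpha) \subset B_{\mathcal{H}^\infty(B_Y,X^{**})}$ with $g_\alpha \xrightarrow{w^*} g$ and $C_{g_\alpha}(f)(y) \to h(y)$ for every $y \in B_Y$. Since $(C_{g_\alpha})$ lies in the weak-star compact set $\M_{u,\infty}(B_X,B_Y)$, I can pass to a subnet so that $C_{g_\alpha} \to \Phi$ in the topology \eqref{weak-star}. Two things then need to be verified: $\widehat{f}(\Phi) = h$ and $\Phi \in \mathscr{F}(g)$. The first is immediate, since \eqref{weak-star} applied to the fixed $f$ gives $\Phi(f)(y) = \lim C_{g_\alpha}(f)(y) = h(y)$. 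For the second, I would apply \eqref{weak-star} with each $x^* \in X^* \subset \A_u(B_X)$ and obtain
\[
\Phi(x^*)(y) \;=\; \lim C_{g_\alpha}(x^*)(y) \;=\; \lim g_\alpha(y)(x^*) \;=\; g(y)(x^*),
\]
where the last equality uses the hypothesis $g_\alpha \xrightarrow{w^*} g$ in $\mathcal{H}^\infty(B_Y,X^{**})$. This identifies $\xi(\Phi)(y) = g(y)$ for every $y \in B_Y$, hence $\Phi \in \mathscr{F}(g)$. I expect the delicate point of the argument to be this match-up of the two weak-star topologies (the one on $\mathcal{H}^\infty(B_Y,X^{**})$ used to define cluster sets and the one on $\M_{u,\infty}(B_X,B_Y)$ used to extract $\Phi$), mediated by the embedding $X^* \hookrightarrow \A_u(B_X)$.

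For weak-star compactness of $\mathcal{C\ell}(f,g)$ inside $\mathcal{H}^\infty(B_Y) = (\mathcal{G}^\infty(B_Y))^*$, the bound $\|C_{g'}(f)\|_\infty \le \|f\|$ places $\mathcal{C\ell}(f,g)$ inside the weak-star compact ball $\|f\|\overline{B}_{\mathcal{H}^\infty(B_Y)}$, so it suffices to prove weak-star closedness. For bounded nets in $\mathcal{H}^\infty(B_Y)$ the weak-star topology coincides with pointwise convergence on $B_Y$ (since evaluations at points of $B_Y$ span a norm-dense subspace of $\mathcal{G}^\infty(B_Y)$), so $\mathcal{C\ell}(f,g)$ is exactly the image under the second projection of
\[
\bigl(\{g\}\times \mathcal{H}^\infty(B_Y)\bigr) \;\cap\; \overline{\bigl\{(g',C_{g'}(f)) : g' \in B_{\mathcal{H}^\infty(B_Y,X^{**})}\bigr\}},
\]
where the closure is taken inside the weak-star compact product $B_{\mathcal{H}^\infty(B_Y,X^{**})} \times \|f\|\overline{B}_{\mathcal{H}^\infty(B_Y)}$. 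As a continuous image of a compact set it is compact, completing the proof.
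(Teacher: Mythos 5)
The inclusion part of your argument is identical to the paper's: extract a subnet of $(C_{g_\alpha})$ converging in $\M_{u,\infty}(B_X,B_Y)$ to some $\Phi$, then verify $\widehat f(\Phi)=h$ by testing $f$ at points $y\in B_Y$ and verify $\xi(\Phi)=g$ by testing $x^*\in X^*\subset\A_u(B_X)$.

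The compactness argument, however, is packaged genuinely differently. The paper proves the set identity
$\mathcal{C\ell}(f,g)=\bigcap_{U\in\mathcal U}\overline{\widehat f(j(\widetilde U))}^{w^*}$,
where $\mathcal U$ is the family of weak-star neighborhoods of $g$, $\widetilde U = U\cap B_{\mathcal{H}^\infty(B_Y,X^{**})}$, and $j(u)=C_u$; being an intersection of weak-star closed sets inside $\|f\|\overline{B}_{\mathcal{H}^\infty(B_Y)}$, the cluster set is then closed and hence compact. The paper establishes that identity by a double inclusion, the harder direction producing a net indexed by pairs $(U,V)$ of neighborhoods of $g$ and of $h$. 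You instead realize $\mathcal{C\ell}(f,g)$ as the image, under the continuous second coordinate projection, of a closed subset of the compact product $\overline{B}_{\mathcal{H}^\infty(B_Y,X^{**})}\times\|f\|\overline{B}_{\mathcal{H}^\infty(B_Y)}$, namely the intersection of $\{g\}\times\mathcal{H}^\infty(B_Y)$ with the weak-star closure of the graph $\{(g',C_{g'}(f)):g'\in B_{\mathcal{H}^\infty(B_Y,X^{**})}\}$. Both routes ultimately use the same elementary fact, that for nets bounded by $\|f\|$ pointwise convergence on $B_Y$ coincides with weak-star convergence in $\mathcal{H}^\infty(B_Y)=\mathcal{G}^\infty(B_Y)^*$, but the graph-closure-and-project formulation replaces the paper's $(U,V)$-indexed diagonal construction with a one-step topological conclusion, which is cleaner and somewhat more transparent. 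One small correction: the ambient compact product must use the closed ball $\overline{B}_{\mathcal{H}^\infty(B_Y,X^{**})}$ in the first factor (you wrote the open one), both so that the product is actually weak-star compact and so that it contains the point $(g,h)$; the set whose closure you take still ranges over $g'$ in the open ball, as the definition of the cluster set requires.
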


\begin{proof}
Let $h \in \mathcal{C\ell}(f,g)$. Then there exists a net $g_\alpha$ in $B_{\mathcal {H}^\infty(B_Y,X^{**})}$ such that $g_\alpha \xrightarrow{w*}g$ and $C_{g_\alpha}(f)(y)\to h(y)$ for all $y \in B_Y$. Since $(C_{g_\alpha})_\alpha$ is a subset of the compact set $\M_{u,\infty}(B_X,B_Y)$ we can find a converging sub-net $(C_{g_{\alpha_\beta}})_\beta$ with limit $\Phi.$ Then $\Phi(f)(y) = \lim_\beta C_{g_{\alpha_\beta}}(f)(y) = h(y)$.

Additionally, since $g_\alpha \xrightarrow{w^*} g$ means that $g_\alpha(y)(x^*) \to g(y)(x^*)$ for all $y \in B_Y, x^* \in X^*$, we have
\begin{align*}
    \xi(\Phi)(y)(x^*) = \Phi(x^*)(y) = \lim_\beta C_{g_{\alpha_\beta}}(x^*)(y) = \lim_\alpha g_{\alpha_\beta}(y)(x^*) = g(y)(x^*).
\end{align*}
 This proves the inclusion $\mathcal{C\ell}(f,g) \subset \widehat{f}(\mathscr{F}(g))$.
 
Now we move on to show that the cluster $\mathcal{C\ell}(f,g)$ is weak-star compact. It is readily seen that $\mathcal{C\ell}(f,g) \subset \|f\|\overline{B}_{\mathcal {H}^\infty(B_Y)}$ so it only remains to show that $\mathcal{C\ell}(f,g)$ is  weak-star closed. 

Let $\mathcal{U}$ denote the set of all weak-star neighborhoods $U$ of $g$ in $\mathcal {H}^\infty(B_Y,X^{**})$. To simplify the notation we put $\widetilde U =U\cap B_{\mathcal {H}^\infty(B_Y,X^{**})}$ for each $U\in\mathcal U$. 

We are going to prove the equality
\begin{align*}
    \mathcal{C\ell}(f,g) = \bigcap\limits_{U \in \mathcal{U}} \overline{\widehat{f}(j(\widetilde U)) }^{w*},
\end{align*}
where $j:\mathcal {H}^\infty(B_Y,X^{**})\to \M_{u,\infty}(B_x,B_Y)$ is the mapping defined by $j(u)=C_u$.

Let $h \in \bigcap\limits_{U \in \mathcal{U}} \overline{\widehat{f}(j(\widetilde U)) }^{w*}$. Then for every $U \in \mathcal{U}$ and any weak-star neighborhood $V$ of $h$ there exists a function $g_{U,V}$ such that
\begin{enumerate}
    \item $g_{U,V} \in U \cap B_{\mathcal {H}^\infty(B_Y)}$.
    \item $C_{g_{U,V}}(f) \in V$.
\end{enumerate}.

We consider all possible pairs $(U,V)$ with the order given by the reverse inclusion (that is, $( U' ,  V') \geq (U,V)$ whenever both $ U' \subset U$ and $ V' \subset V$). As a result, we have obtained a net $(g_{U,V})_{(U,V)}$ indexed in the pairs of weak-star neighborhoods of $g$ and $h$ respectively. It is clear from construction that this net satisfies
\begin{enumerate}
    \item $g_{U,V} \xrightarrow{w^*}g$.
    \item $C_{g_{U,V}}(f) \xrightarrow{w^*}h$.
\end{enumerate}
We conclude that $h \in \mathcal{C\ell}(f,g)$ and thus the inclusion $\bigcap\limits_{U \in \mathcal{U}} \overline{\widehat{f}(j(\widetilde U)) }^{w*} \subset \mathcal{C\ell}(f,g)$ holds.
To check the converse, let $h \in \mathcal{C\ell}(f,g)$ 
 satisfying 
\begin{enumerate}
    \item $g_\alpha \xrightarrow{w^*}g$.
    \item $C_{g_\alpha}(f) \xrightarrow{w^*}h$.
\end{enumerate}
Given $U\in\mathcal U$, let
 $\alpha_0$ such that  $g_\alpha \in U$ for all $\alpha \geq \alpha_0$. Since $h \in \overline{(C_{g_\alpha)}(f)}^{w*}_{\alpha \geq \alpha_0}$ it follows that $h \in \overline{\widehat{f}(j(\widetilde U))}^{w*}$. Being the neighborhood $U$  arbitrary  the desired equality is proved.
\end{proof}

\subsection{Cluster sets for $\M_{u,\infty}(\B,\B)$}

Back in Section \ref{FiberSection} we presented results regarding the size of the fibers. \textit{A priori}, this is directly related to the size of the sets $\widehat{f}(\mathscr{F}(g))$. However, we show that the tools developed for Section \ref{FiberSection} can be adapted for a set of functions to gain insight on the size of cluster side of the inclusion 
\begin{align*}
    \mathcal{C\ell}(f,g) \subset \widehat{f}(\mathscr{F}(g)).
\end{align*}
This also goes in line with the philosophy of \cite{AlvesCarando} of looking for sets of functions with large cluster sets.

The study of cluster sizes for different functions $f$ differs from that of the fibers in the previous section in many aspects, but mainly it is worth remarking that we now lack the possibility to try different ``test functions'' as the function is fixed when we consider a certain cluster. Consequently we can not take full advantage of the previous results for the fibers as the proofs require a set of test functions instead of a single one. Additionally, it is worth noting that the inclusion $\mathcal{C\ell}(f,g) \subset \widehat{f}(\mathscr{F}(g))$ proved in Lemma \ref{inclusion} tells us that we can only look for non-singleton cluster sets for functions $g$ with non-singleton fibers.

Also, if $f$ can be approximated by finite type polynomials (i. e. polynomials that are linear combinations of products of linear functionals) then $\Phi(f)=C_{\xi(\Phi)}(f)$. Thus in this case for any function $g$ the cluster set is singleton: $\mathcal{C\ell}(f,g) = \widehat{f}(\mathscr{F}(g))=\{C_{\xi(\Phi)}(f)\}$.

By \cite[Lem. 6.2]{aron1995weak}, a function $f\in\A_u(\B)$ can be approximated by finite type polynomials if and only if it is weakly  continuous at every $x\in\B$. 

Hence, the cluster set $\mathcal{C\ell}(f,g)$ could be \textsl{large} whether $f$ is not weakly  continuous at some point and $g$ is an inner or middle function with non-singleton fiber.

We begin by showing that if $g$ is an inner function and $f$ is not weakly  continuous at some point of the image of $g$ then the cluster set ${C\ell}(f,g)$ contains an analytic copy of the complex disk. The proof is inspired by \cite[Th. 3.3]{DimantSinger} (see also \cite[Th. 3.1]{AronFalcoGarciaMaestre}).

\begin{proposition}
Let $g \in B_{\mathcal {H}^\infty(\B,\ell_2)}$ and let $f_0\in\A_u(\B)$ which is not weakly  continuous at $g(x_0)$ for certain $x_0\in\B$. Then, the cluster set $ {C\ell}(f_0,g)$ contains an analytic copy of the complex disk $\D$.
     
\end{proposition}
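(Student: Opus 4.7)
Since $\|g\| < 1$, the idea is to perturb $g$ by constant weakly null vectors in $\ell_2$, producing nets of composition homomorphisms converging weak-star to $C_g$ whose pointwise limits on $\B$ give distinct cluster values parametrized analytically by $\lambda\in\D$.

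First I would extract a failing sequence. Since $f_0$ is not weakly continuous at $y_0 := g(x_0)$, there exists $(u_n)\subset \B$ with $u_n \xrightarrow{w} y_0$ and $f_0(u_n)\to \mu\neq f_0(y_0)$. Set $v_n=u_n-y_0$, which is weakly null in $\ell_2$; passing to a subsequence we may assume $\|v_n\|\to L>0$ (the case $L=0$ would give norm convergence and, by continuity of $f_0$, $f_0(u_n)\to f_0(y_0)$). Choose $r>0$ so that $\|g\|+r\sup_n\|v_n\|<1$, set $g_n^{(\lambda)}(x):=g(x)+\lambda v_n$ for $\lambda\in r\overline{\D}$, and observe that each $g_n^{(\lambda)}$ lies in $B_{\mathcal{H}^\infty(\B,\ell_2)}$ and $g_n^{(\lambda)}\xrightarrow{w^*}g$ as $n\to\infty$. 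Fixing a free ultrafilter $\mathcal U$ on $\N$, in the style of Lemma~\ref{iny-escalar} I would define
\begin{align*}
h^{(\lambda)}(y) := \lim_{n,\mathcal U} f_0\bigl(g(y)+\lambda v_n\bigr), \qquad y\in\B.
\end{align*}
Weak-star compactness of $\|f_0\|\,\overline{B}_{\mathcal{H}^\infty(\B)}$ ensures $h^{(\lambda)}\in\mathcal{H}^\infty(\B)$, and the reasoning of Lemma~\ref{inclusion} places $h^{(\lambda)}\in\mathcal{C\ell}(f_0,g)$. Analyticity of $\lambda\mapsto h^{(\lambda)}$ follows because each $\lambda\mapsto f_0(g(y)+\lambda v_n)$ is bounded holomorphic on $r\D$ and ultrafilter limits preserve such families.

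The main obstacle is showing $\lambda\mapsto h^{(\lambda)}$ is non-constant. Evaluating at $y=x_0$, the scalar function $F(\lambda):=h^{(\lambda)}(x_0)=\lim_{\mathcal U}f_0(y_0+\lambda v_n)$ is bounded holomorphic on $r\D$ with $F(0)=f_0(y_0)$. Suppose, for contradiction, $F\equiv f_0(y_0)$. The functions $\phi_n(\lambda):=f_0(y_0+\lambda v_n)$ are holomorphic on the disks $D_n:=\{\lambda\in\C:y_0+\lambda v_n\in\B\}$, each containing both $0$ and $1$ (the latter since $u_n\in\B$), are uniformly bounded by $\|f_0\|$, and asymptotically tend to a disk around the origin of radius $\sqrt{1-\|y_0\|^2}/L\geq 1$ (using $\langle v_n,y_0\rangle\to 0$ and $\|y_0+v_n\|^2<1$). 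Vitali's theorem applied to $\{\phi_n\}$ on a fixed interior subdomain, together with pointwise convergence on $r\D$, yields $\phi_n\to f_0(y_0)$ uniformly on compacts of the asymptotic disk. Finally, uniform continuity of $f_0$ on $\overline{\B}$ gives $|\phi_n(1)-\phi_n(1-\delta)|\leq \omega_{f_0}(\delta\|v_n\|)$ for each $\delta>0$; passing $n\to\infty$ and then $\delta\to 0$ forces $\mu=f_0(y_0)$, a contradiction. Hence $F$ is non-constant, so there is $\lambda_0\in r\D$ with $F'(\lambda_0)\neq 0$; at this point $\lambda\mapsto h^{(\lambda)}$ is locally injective, and composing with a biholomorphism of $\D$ onto a small disk around $\lambda_0$ produces the desired analytic copy of $\D$ inside $\mathcal{C\ell}(f_0,g)$.
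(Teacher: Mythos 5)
Your proof is essentially correct, but it takes a genuinely different (and more laborious) route to establish non\-constancy of the cluster values, which is the crux of the argument. The paper invokes the observation that, because the failure of weak continuity can be detected by nets lying in an arbitrarily small ball around $y_0$ (cf.\ \cite[Lem.~6.2]{aron1995weak}), one may take the witnessing net inside $B_{1-r}(y_0)$ for $\|g\|<r<1$. This makes $\lambda\mapsto f\bigl(g(x)+\lambda(y_\alpha-y_0)\bigr)$ holomorphic on $\D$ with a continuous extension to $\overline{\D}$; comparing the ultrafilter limit at $\lambda=0$ (which gives $f_0(y_0)$) with that at $\lambda=1$ (which gives $\lim_\alpha f_0(y_\alpha)$) instantly yields a non\-constant element of $\mathcal{A}(\D)$. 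You instead take an unconstrained failing sequence (legitimate here because bounded subsets of $\ell_2$ have metrizable weak topology), restrict $\lambda$ to a small disk $r\D$ chosen only from the trivial bound $\|g\|+r\sup_n\|v_n\|<1$, and then must propagate the ultrafilter limit out to $\lambda=1$ by a normal\-families argument plus a boundary estimate via uniform continuity. This works, but it is more delicate and your write\-up glosses over two points: (i) the ultrafilter limit $\lim_{\mathcal U}\phi_n(\lambda)\equiv f_0(y_0)$ on $r\D$ does not give pointwise convergence of the sequence $(\phi_n)$; one must first extract a subsequence $(n_k)$ (via a diagonal argument over a countable dense set of $\lambda$'s, using that finite intersections of $\mathcal U$-sets lie in $\mathcal U$) before Vitali/Montel can be applied; and (ii) Vitali's theorem is stated for a fixed domain, whereas your disks $D_n$ vary, so one needs the Montel normality argument on compacts eventually contained in all $D_{n_k}$, together with the identity theorem, rather than Vitali verbatim. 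Both are repairable, and your boundary argument via $\bigl|\phi_{n_k}(1)-\phi_{n_k}(1-\delta)\bigr|\le\omega_{f_0}(\delta\|v_{n_k}\|)$ correctly handles the case where the asymptotic radius equals $1$. The trade\-off: your approach avoids relying on the localization lemma from \cite{aron1995weak}, at the cost of a considerably heavier analytic argument; the paper's route is shorter once that lemma is granted.
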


\begin{proof}
Let $\|g\|<r<1$. If $f_0$ is not weakly  continuous at $g(x_0)$ it should exist a net $(y_\alpha)_{\alpha}\subset \B$ such that $y_\alpha\overset{w^*}{\to} y_0=g(x_0)$ and $|f_0(y_\alpha)-f_0(y_0)|>\varepsilon$, for certain $\varepsilon>0$. Note that since the argument in \cite[Lem. 6.2]{aron1995weak} can be repeated in any ball we can chose that the net $(y_\alpha)_{\alpha}$ is contained in the ball $B_{1-r}(y_0)$. This is important for the well definition (for each $k\in\N$
) of the following mapping:

\begin{align*}
    \Psi_k:\D &\to \M_{u,\infty}(\B,\B)\\
    \Psi_k(\lambda)(f)(x) &= f\left(g(x) + \lambda (y_\alpha -y_0)\right).
\end{align*}

The inclusion $\M_{u,\infty}(\B,\B)\subset \mathcal L(\A_u(\B), \mathcal {H}^\infty(\B))=(\A_u(\B) \widehat{\otimes}_\pi \mathcal G^\infty(\B))^*$ and the fact that each $\Psi_k$ is a bounded analytic mapping, allow us to see   the sequence $(\Psi_k)_k$  contained in the ball of $\mathcal {H}^\infty(\D, \mathcal L(\A_u(\B), \mathcal {H}^\infty(\B))) $ of radius $\|f\|$.
Recall from \eqref{H infinito es dual} that $\mathcal {H}^\infty(\D, \mathcal L(\A_u(\B), \mathcal {H}^\infty(\B)))$ is a dual space. Thus, the weak-star compactness of the ball assures the existence of a net $(\Psi_{k_\alpha})_\alpha$ weak-star convergent to an element $\Psi_g\in \mathcal {H}^\infty(\D, \mathcal L(\A_u(\B), \mathcal {H}^\infty(\B)))$. 

It is easy to see that for any $\lambda\in\D$, $\Psi_g(\lambda)$ is multiplicative. Hence, $\Psi_g\in \mathcal {H}^\infty(\D, \M_{u,\infty}(\B,\B))$. 

Also, note that each $\Psi_k(\lambda)$ is a composition homomorphism. Thus, we have, for any $\lambda\in\D$,  $x\in\B$,
$$\Psi_g(\lambda)(f_0)(x)= \underset{\alpha}{\lim}\Psi_{k_\alpha}(\lambda)(f_0)(x)= \underset{\alpha}{\lim}C_{\xi(\Psi_{k_\alpha}(\lambda))}(f_0)(x).$$

Since $\xi(\Psi_{k_\alpha}(\lambda))$ is weak star convergent to $g$ we obtain that $\Psi_g(\lambda)(f_0) \in {C\ell}(f_0,g)$.


For the injectivity, first, observe that $\Psi_g$ can be continuously extended to $\overline{\D}$. Second, we know

$$
\Psi_g(0)(f_0)(x_0)=f_0(g(x_0))= f_0(y_0) \quad\textrm{and} \quad \Psi_g(1)(f_0)(x_0)=\underset{\alpha}{\lim} f_0(y_\alpha).
$$

Therefore,  $|\Psi_g(0)(f_0)(x_0)-\Psi_g(1)(f_0)(x_0|\ge\varepsilon$ and so  the mapping $\Psi_g(\cdot) (f_0)(x_0)$ belongs to the disk algebra $\A(\D)$ and it is not constant. Hence, it should exist $\lambda_0\in\D$ and $r_0>0$ such that it is injective in the disk $\D(\lambda_0, r_0)$.

This implies that the mapping $\Psi_g(\cdot)(f_0):\D(\lambda_0, r_0)\to {C\ell}(f_0,g)$ is injective. Finally, composing with the mapping from $\D$ to $\D(\lambda_0, r_0)$ given by $[\lambda\mapsto r_0\lambda + \lambda_0]$ we obtain that ${C\ell}(f_0,g)$ contains an analytic copy of $\D$.
\end{proof}

Now, if we consider very special functions $f$ we can prove that  the cluster set ${C\ell}(f,g)$ contains an infinite dimensional ball for any inner function $g \in B_{\mathcal {H}^\infty(\B, \ell_2)}$.

\begin{proposition}
\label{cluster1}
    Let $f_0(x) = \sum_{j=1}^\infty x_j^N$ with $N\in\N_{\geq 2}$. Then for every $g \in B_{\mathcal {H}^\infty(\B,\ell_2)}$ the cluster ${C\ell}(f_0,g)$ contains an analytic copy of the ball  $B_{\mathcal {H}^\infty(\B)}$.
\end{proposition}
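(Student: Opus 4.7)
The plan is to adapt the tail-padding technique of Lemma~\ref{iny-escalar} and Example~\ref{ge1}: for each $h\in B_{\mathcal{H}^\infty(\B)}$ we will construct a sequence $(g_{h,k})_k\subset B_{\mathcal{H}^\infty(\B,\ell_2)}$ with $g_{h,k}\xrightarrow{w^*}g$ whose pointwise limit under $C_{\cdot}(f_0)$ depends \emph{linearly} on $h$. Concretely the aim is $C_{g_{h,k}}(f_0)(x)\to f_0(g(x))+\tilde c\,h(x)$ for a fixed $\tilde c\neq 0$, so that $f_0\circ g+\tilde c\,h$ lies in $\mathcal{C\ell}(f_0,g)$ for every $h$; the affine injection $\Psi(h)=f_0\circ g+\tilde c\,h$ will then be the desired analytic copy of the ball.

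The core ingredient is an algebraic identity that suppresses every power of $h$ other than $h^1$ after $f_0$ is applied to the padding. Fix an integer $M>N(N-1)$, let $\omega=e^{2\pi i/M}$, pick $a>0$, and set $c_{j,0}=\omega^j$, $c_{j,1}=a\,\omega^{-(N-1)j}$ for $j=1,\dots,M$. Factoring $c_{j,0}+c_{j,1}z=\omega^j(1+a\omega^{-Nj}z)$ and applying the binomial theorem, the coefficient of $z^l$ in $\sum_{j=1}^M(c_{j,0}+c_{j,1}z)^N$ carries the factor $\sum_{j=1}^M\omega^{Nj(1-l)}$. For $l\in\{0,2,3,\dots,N\}$ the exponent $N(1-l)$ is a nonzero integer with $|N(1-l)|\le N(N-1)<M$, so $\omega^{N(1-l)}\neq 1$ and this geometric sum vanishes; only $l=1$ survives, yielding
\begin{equation*}
\sum_{j=1}^M\bigl(c_{j,0}+c_{j,1}z\bigr)^N=NMa\cdot z\qquad\text{for every } z\in\C.
\end{equation*}

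Now fix $r\in(\|g\|,1)$ and pick $\eps>0$ with $r^2+\eps^2 M(1+a^2)<1$. For $h\in B_{\mathcal{H}^\infty(\B)}$ and $k\in\N$ define
\begin{equation*}
g_{h,k}(x)=\sum_{j=1}^k\langle g(x),e_j\rangle e_j+\eps\sum_{j=1}^M\bigl(c_{j,0}+c_{j,1}h(x)\bigr)e_{k+j}.
\end{equation*}
The truncated $g$ and the padding are supported on disjoint coordinates, and the cross terms in the expansion of the padded norm vanish by $\sum_j c_{j,0}\overline{c_{j,1}}=\bar a\sum_j\omega^{Nj}=0$; hence $\|g_{h,k}(x)\|^2\le\|g(x)\|^2+\eps^2 M(1+a^2|h(x)|^2)<1$ and $g_{h,k}\in B_{\mathcal{H}^\infty(\B,\ell_2)}$. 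Weak-star convergence $g_{h,k}\xrightarrow{w^*}g$, by boundedness, needs only to be checked on elementary tensors $\delta_y\otimes x^*$: the truncation converges to $\langle g(y),x^*\rangle$ by Parseval, while the padding is bounded in absolute value by $\eps(1+a)\sum_{j=1}^M|\langle x^*,e_{k+j}\rangle|\to 0$. Combining the disjoint-support structure with the identity above,
\begin{equation*}
C_{g_{h,k}}(f_0)(x)=\sum_{j=1}^k\langle g(x),e_j\rangle^N+\tilde c\,h(x),\qquad \tilde c=\eps^N NMa\neq 0,
\end{equation*}
and letting $k\to\infty$ gives $C_{g_{h,k}}(f_0)(x)\to f_0(g(x))+\tilde c\,h(x)$, placing $\Psi(h)$ in $\mathcal{C\ell}(f_0,g)$ for every $h\in B_{\mathcal{H}^\infty(\B)}$.

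The main obstacle is exactly this algebraic identity. The naive padding $\eps h(x)e_{k+1}$ only produces the limit $f_0\circ g+\eps^N h^N$, and the map $h\mapsto h^N$ is not injective: two holomorphic functions on the connected domain $\B$ with the same $N$-th power differ by a constant $N$-th root of unity. Killing the coefficients of $h^0,h^2,\dots,h^N$ while keeping a nonzero coefficient of $h$ is precisely what the roots-of-unity construction accomplishes.
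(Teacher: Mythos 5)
Your proof is correct, but it follows a genuinely different route from the paper's. The paper uses the ``naive'' one-coordinate padding $(1-r)h(x)e_{k+1}$, which after applying $f_0$ yields $\Psi_g(h)(f_0)=f_0\circ g+(1-r)^N h^N$; since $h\mapsto h^N$ is only injective up to multiplication by an $N$-th root of unity, the paper then has to fix a base point $h$ with $\|h\|=\tfrac12$, show that $\Psi_g$ is injective on a small ball $B_s(h)$ where the root-of-unity ambiguity cannot occur, and finally rescale via $u\mapsto su+h$ to recover a copy of $B_{\mathcal{H}^\infty(\B)}$. You instead pad with $M>N(N-1)$ coordinates weighted by roots of unity $c_{j,0}=\omega^j$, $c_{j,1}=a\,\omega^{-(N-1)j}$, and exploit the vanishing geometric sums $\sum_j\omega^{N(1-l)j}=0$ for $l\neq 1$ to annihilate every power of $h$ except the linear one, producing the affine limit $f_0\circ g+\tilde c\,h$ with $\tilde c=\eps^N NMa\neq 0$. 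This buys two simplifications: the map $h\mapsto f_0\circ g+\tilde c\,h$ is manifestly an analytic injection of the \emph{entire} ball, so no restriction-and-rescale step is needed; and because the limit in $k$ is computed explicitly from a genuine sequence, you bypass the paper's detour through weak-star compactness and convergent subnets. The only thing your argument requires that the paper's does not is the disjoint-support cross-term cancellation $\sum_j c_{j,0}\overline{c_{j,1}}=a\sum_j\omega^{Nj}=0$ in the norm estimate (valid since $0<N<M$), which you verify. As a side remark, the paper's stated radius $s=\tfrac{\tau}{2(\tau+1)}$ appears to be slightly off (the triangle inequality yields the requirement $s\le\tfrac{\tau}{2(\tau+2)}$); your linearized construction sidesteps that bookkeeping entirely.
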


\begin{proof}
Fix $g \in B_{\mathcal {H}^\infty(\B,\ell_2)}$, $\|g\|<r<1$. We define for each $k \in \N$ the mapping $\Psi_k: B_{\mathcal {H}^\infty(\B)} \to \M_{u,\infty}(\B,\B)$ given by 
\begin{align*}
    \Psi_k(h)(f)(x) = f\left(\sum_{j=1}^k \langle g(x), e_j \rangle e_j + (1-r)h(x)e_{k+1}\right),
\end{align*}
for $h\in B_{\mathcal {H}^\infty(\B)},\  f\in\A_u(\B),\ x\in\B$.
Clearly, $\Psi_k$ is a bounded analytic mapping and for any $h$, $\Psi_k(h)$ is a composition homomorphism. 



Argumenting as in the previous proposition we know that there is a net $(\Psi_{k_\alpha})_\alpha$ weak-star convergent to an element $\Psi_g\in \mathcal {H}^\infty(B_{\mathcal {H}^\infty(\B)}, \mathcal L(\A_u(\B), \mathcal {H}^\infty(\B)))$.

By definition 
$$\Psi_g(h)(f)(x)= \underset{\alpha}{\lim}\Psi_{k_\alpha}(h)(f)(x)= \underset{\alpha}{\lim}C_{\xi(\Psi_{k_\alpha}(h))}(f)(x).$$

Since $\xi(\Psi_{k_\alpha}(h))$ is weak star convergent to $g$ we obtain that $\Psi_g(h)(f_0) \in {C\ell}(f_0,g)$.
Further, for every $k \in \N$, $h \in B_{\mathcal {H}^\infty(\B)}$ we have 
\begin{align*}
    \Psi_k(h)(f)(x) &= \sum_{j=1}^k \langle g(x), e_j \rangle^N + (1-r)^N h(x)^N.
    \intertext{Taking the limit as $k \to \infty$ we obtain}
    \Psi_g(h)(f)(x) &= \sum_{j=1}^\infty \langle g(x), e_j \rangle^N + (1-r)^N h(x)^N.
\end{align*}
As a result, if $\Psi_g(h)(f) = \Psi_g(h')(f)$ we obtain $h(x)^N = h'(x)^N$ for all $x \in \B$, so by the identity principle there exists $\zeta \in \C$ satisfying $\zeta^N = 1$ such that $h' = \zeta h$. Finally note that the equality $\|h - \zeta h\| = |1 - \zeta|\|h\|$ shows that if we put $\tau=\min\{|1-\zeta|: \zeta^N=1, \zeta\not=1\}$ then for any fixed $h \in B_{\mathcal {H}^\infty(\B)}$ satisfying $\|h\| = \frac12$ we have that $\Psi_g$ is injective when restricted to $B_s(h)$ for $s=\frac{\tau}{2(\tau +1)}$.
In other words, we have seen that the cluster ${C\ell}(f_0,g)$ contains an analytic copy of a ball  $B_s(h)\subset\mathcal {H}^\infty(\B)$.

Finally, composing with the canonical mapping $\chi:B_{\mathcal {H}^\infty(\B)}\to B_s(h)$ given by $\chi(u)=su+h$ we conclude that $\Psi_g\circ\chi: B_{\mathcal {H}^\infty(\B)}\to {C\ell}(f_0,g)$ is an analytic injection.
\end{proof}

The same result can be obtained for functions $g$ satisfying the hypothesis of Theorem~\ref{middleTheorem}.

\begin{proposition}
     Let $f_0(x) =\sum_{j=1}^\infty x_j^N$ with $N\in\N_{\geq 2}$. Then for every $g \in S_{\mathcal {H}^\infty(\B,\ell_2)}$ with $g(\B) \subset \B$ such that
     \begin{enumerate}
    \item $\|g(0)\| + \|g - g(0)\| \leq 1.$
    \item There exists a linear transformation $P: \ell_2 \to \ell_2$ with $\|P\| \leq 1$ and non-trivial kernel such that $g = g \circ P$,
\end{enumerate}
the cluster ${C\ell}(f_0,g)$ contains an analytic copy of the ball  $B_{\mathcal {H}^\infty(\B)}$.
\end{proposition}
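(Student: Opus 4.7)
The plan is to fuse the middle-fiber perturbation construction of Theorem~\ref{middleTheorem} with the cluster-set extraction technique of Proposition~\ref{cluster1}, but using \emph{scalar}-valued test functions $h \in B_{\mathcal {H}^\infty(\B)}$ in order to obtain an analytic copy of $B_{\mathcal {H}^\infty(\B)}$ itself. I would first handle the case $g(0) = 0$. Fix $w \in S_{\ell_2}\cap \ker P$ and, for $k\in\N$ and $h\in B_{\mathcal {H}^\infty(\B)}$, put
$$
s_{h,k}(x) = \sum_{j=1}^k \langle g(x), e_j\rangle e_j + \langle x,w\rangle\, h(x)\, e_{k+1}.
$$
The technical step $s_{h,k}(\B)\subset \B$ is a direct transcription of \eqref{ineq1}--\eqref{ineq2}: orthogonality of the tail summand to $\mathrm{span}\{e_1,\dots,e_k\}$ gives $\|s_{h,k}(x)\|^2 \le \|g(x)\|^2 + |\langle x,w\rangle|^2$; the identity $g = g\circ P$ together with $\|P\|\le 1$, $g(0)=0$, and Schwarz's lemma produces $\|g(x)\|\le \|x - \langle x,w\rangle w\|$; Pythagoras then collapses the sum to $\|x\|^2 < 1$.

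With $s_{h,k}$ admissible, I would set $\Psi_k(h)(f)(x) = f(s_{h,k}(x))$, so each $\Psi_k(h)$ is a composition homomorphism, and follow Proposition~\ref{cluster1}: $(\Psi_k)_k$ lies in the (weak-star compact) ball of radius $\|f\|$ inside $\mathcal {H}^\infty(B_{\mathcal {H}^\infty(\B)}, \mathcal{L}(\A_u(\B),\mathcal {H}^\infty(\B)))$; a weak-star convergent subnet produces a limit $\Psi_g$ still taking values in $\M_{u,\infty}(\B,\B)$ with $\xi(\Psi_g(h))=g$, so that $\Psi_g(h)(f_0)\in {C\ell}(f_0,g)$. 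Taking $k\to\infty$ in the explicit evaluation on $f_0$ yields
$$
\Psi_g(h)(f_0)(x) \;=\; \sum_{j=1}^\infty \langle g(x), e_j\rangle^N \;+\; \langle x,w\rangle^N\, h(x)^N,
$$
where the infinite sum converges absolutely since $g(x)\in \ell_2$ and $N\ge 2$.

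The injectivity step is then identical to Proposition~\ref{cluster1}: from $\Psi_g(h)(f_0) = \Psi_g(h')(f_0)$ one gets $\langle x,w\rangle^N h(x)^N = \langle x,w\rangle^N h'(x)^N$ on $\B$, hence $h(x)^N = h'(x)^N$ on the open dense set $\{x \in \B : \langle x,w\rangle \ne 0\}$ and, by the identity principle, on all of $\B$; the factorization $\prod_{\zeta^N=1}(h - \zeta h') \equiv 0$ then forces $h' = \zeta h$ for some $N$-th root of unity. Centering at a fixed $h_0$ with $\|h_0\| = \tfrac{1}{2}$ and setting $\tau = \min\{|1-\zeta|: \zeta^N=1,\, \zeta\ne 1\}$, $s = \tau/(2(\tau+1))$, the restriction of $\Psi_g$ to the ball $B_s(h_0)$ is injective and, composed with the affine rescaling $u\mapsto s u + h_0$, provides the desired analytic copy of $B_{\mathcal {H}^\infty(\B)}$ inside ${C\ell}(f_0,g)$. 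The general case $g(0)\neq 0$ is obtained by inserting the factor $\|\widetilde g\|$ (with $\widetilde g = g - g(0)$) in the perturbation term and invoking hypothesis~(1), exactly as in the second half of Theorem~\ref{middleTheorem}.

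The main obstacle I anticipate is handling the vanishing of $\langle \cdot,w\rangle^N$ in the injectivity step, since this factor could in principle erase information about $h$ on a hyperplane; but openness, density, connectedness, and analyticity close this gap via the identity principle. Essentially all the novelty is already encoded in Theorem~\ref{middleTheorem}, and the remainder is a straightforward blend of that construction with the cluster-set argument of Proposition~\ref{cluster1}.
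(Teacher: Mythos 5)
Your proposal is correct and follows essentially the same route as the paper, which simply states that the proof "is obtained following the steps of the proof of Proposition~\ref{cluster1} combined with some arguments from the proof of Theorem~\ref{middleTheorem}" after writing down the maps $\Psi_k(h)(f)(x) = f\bigl(\sum_{j=1}^k\langle g(x),e_j\rangle e_j + \|\widetilde g\|\langle x,w\rangle h(x)e_{k+1}\bigr)$. You reproduce exactly these maps (splitting into the $g(0)=0$ case first, which is harmless since then $\|\widetilde g\|=1$), derive the range inclusion $s_{h,k}(\B)\subset\B$ from \eqref{ineq1}--\eqref{ineq2}, extract the weak-star limit as in Proposition~\ref{cluster1}, and carry out the injectivity step. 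The one place you go beyond the paper's terse reference is worth noting: in Proposition~\ref{cluster1} the perturbation coefficient is the nonzero constant $(1-r)^N$, whereas here it is $\|\widetilde g\|^N\langle x,w\rangle^N$, which vanishes on the hyperplane $\{\langle\cdot,w\rangle=0\}$; you correctly observe that the identity principle (applied on the open, dense, connected complement) still yields $h^N=(h')^N$ on all of $\B$, so the factorization into $N$-th roots of unity and the localization to a ball $B_s(h_0)$ proceed as in Proposition~\ref{cluster1}. This is a genuine detail the paper leaves implicit, and your treatment of it is right.
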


\begin{proof}
 Let $\widetilde g = g - g(0)$ and define for each $k \in \N$ the mapping $\Psi_k: B_{\mathcal {H}^\infty(\B)} \to \M_{u,\infty}(\B,\B)$ given by 
 
 \begin{align*}
    \Psi_k(h)(f)(x) = f\left(\sum_{j=1}^k \langle g(x), e_j \rangle e_j + \|\widetilde{g}\| \langle x , w \rangle h(x) e_{k+1}\right).
 \end{align*}
 Now, the result is obtained following the steps of the proof of Proposition~\ref{cluster1} combined with some arguments from the proof of Theorem~\ref{middleTheorem}.
\end{proof}

\bibliographystyle{amsplain}
\bibliography{liografia.bib}
\end{document}